\documentclass[11pt]{article}
\usepackage{amssymb}
\usepackage{amsthm}
\usepackage{amsmath}
\usepackage{fullpage,hyperref}
\usepackage{graphicx,url,amsmath,amsthm,amsfonts,amssymb,subfigure,bbm}

\newtheorem{theorem}{Theorem}[section]
\newtheorem{lemma}[theorem]{Lemma}

\newtheorem{corollary}[theorem]{Corollary}

\newtheorem{definition}[theorem]{Definition}

\newtheorem{conjecture}{Conjecture}

\newcommand{\jnote}[1]{}

\newcommand{\E}{{\mathbb E}}

\newcommand{\dem}{\mathrm{dem}}

\newcommand{\remove}[1]{}
\newcommand{\1}{\mathbf{1}}

\newcommand{\len}{\mathsf{len}}

\newcommand{\rank}{\mathsf{rank}}

\newcommand{\cP}{\mathcal{P}}

\newcommand{\fint}{\mathsf{int}}

\newcommand{\embeds}{\rightsquigarrow}
\newcommand{\edgerank}{\mathsf{edge\mbox{-}rank}}

\newcommand{\pr}{\mathbb{P}}

\begin{document}

\title{{\bf Pathwidth, trees, and random embeddings}\vphantom{\footnote{A portion of the results in this paper were announced at the 41st Annual Symposium
on the Theory of Computing \cite{LS-STOC09}.}}}

\author{James R. Lee\thanks{Computer Science \& Engineering, University of Washington. Research partially supported by NSF grant CCF-0644037 and a
Sloan Research Fellowship.  E-mail: {\tt jrl@cs.washington.edu}.} \and Anastasios Sidiropoulos\thanks{Toyota Technological Institute at Chicago.  E-mail: {\tt tasos@ttic.edu}.}}

\date{}

\maketitle

\begin{abstract}
We prove that, for every integer $k \geq 1$, every shortest-path metric on a graph of pathwidth $k$ embeds
into a distribution over random trees with distortion at most $c=c(k)$, independent of the graph size.
A well-known conjecture of Gupta, Newman, Rabinovich, and Sinclair \cite{GNRS99} states that for every
minor-closed family of graphs $\mathcal F$, there is a constant $c(\mathcal F)$ such that
the multi-commodity max-flow/min-cut gap for every flow instance on a graph from $\mathcal F$
is at most $c(\mathcal F)$.  The preceding embedding theorem is used to prove this
conjecture whenever the family $\mathcal F$ does not contain all trees.
\end{abstract}

\section{Introduction}
\label{sec:intro}

We view an undirected graph $G=(V,E)$ as a topological
template that supports a number of different geometries.
Such a geometry is specified by a non-negative
length function $\len : E \to [0,\infty)$ on edges,
which induces a shortest-path pseudometric $d_{\len}$ on $V$,
with $$d_{\len}(u,v) = \textrm{length of the shortest path between $u$ and $v$ in $G$},$$
where a pseudometric might have $d_{\len}(u,v)=0$ for some pairs $u,v \in V$ with $u \neq v$.
A pseudometric $d$ is {\em supported on $G$} if $d=d_{\len}$ for some such $\len : E \to [0,\infty)$.
From this point of view, we are interested in properties
which hold simultaneously for all geometries supported on $G$,
or even for all geometries supported on a family of graphs $\mathcal F$.
In what follows, we will deal exclusively with {\em finite} graphs and families
of finite graphs unless explicitly stated otherwise.

In the seminal works of Linial-London-Rabinovich \cite{LLR95} and
Aumann-Rabani \cite{AR98}, and later Gupta-Newman-Rabinovich-Sinclair \cite{GNRS99},
the geometry of graphs is related to the classical
study of the relationship between flows and cuts.

\medskip
\noindent
{\bf Multi-commodity flows and $L_1$ embeddings.}
For a metric space $(X,d)$, we use $c_1(X,d)$ to denote
the {\em $L_1$ distortion of $(X,d)$}, i.e.
the infimum over all numbers $D$ such that
$X$ admits an embedding $f : X \to L_1$ with
$$
d(x,y) \leq \|f(x)-f(y)\|_1 \leq D \cdot d(x,y)
$$
for all $x,y \in X$.  Here, we have $L_1 = L_1([0,1])$,
which can be replaced by the sequence space $\ell_1$ when $X$ is finite.

Corresponding to the preceding discussion, for a graph $G=(V,E)$ we write
$c_1(G) = \sup c_1(V, d)$ where $d$ ranges over all metrics supported on $G$.
For a family $\mathcal F$ of graphs, we write $c_1(\mathcal F) = \sup_{G \in \mathcal F} c_1(G)$.
Thus for a family $\mathcal F$ of finite graphs, $c_1(\mathcal F) \leq D$ if and only if
every geometry supported on a graph in $\mathcal F$ embeds into $L_1$
with distortion at most $D$.

A {\em multi-commodity flow instance in $G$} is specified
by a pair of non-negative mappings $\mathrm{cap} : E \to \mathbb R$ and $\mathrm{dem} : V \times V \to \mathbb R$.
We write $\mathsf{maxflow}(G; \mathrm{cap}, \mathrm{dem})$ for the value of the {\em maximum concurrent flow} in this instance,
which is the maximal value $\varepsilon$
such that a flow of value $\varepsilon \cdot \dem(u,v)$ can be simultaneously routed between every pair $u,v \in V$
while not violating the given edge capacities.

A natural upper bound on $\mathsf{maxflow}(G; \mathrm{cap}, \mathrm{dem})$ is given by the {\em sparsity}
of any cut $S \subseteq V$:
\begin{equation}\label{eq:sparse}
\Phi(S;\mathrm{cap},\mathrm{dem}) = \frac{\sum_{uv \in E} \mathrm{cap}(u,v) |\1_S(u)-\1_S(v)|}{\sum_{u,v \in V} \mathrm{dem}(u,v) |\1_S(u)-\1_S(v)|},
\end{equation}
where $\1_S : V \to \{0,1\}$ is the indicator function for membership in $S$.
In the case where $\dem(u,v) > 0$ for exactly one pair $u,v$, also known as
\emph{single-commodity flow} \cite{FF56}, minimizing the upper bound \eqref{eq:sparse} over all cuts $S\subseteq V$
computes the minimum $u$-$v$ cut in $G$, and the max-flow/min-cut theorem states that this upper bound
is achieved by the corresponding maximum flow.

In general, we write
$$\mathsf{gap}(G)
= \sup_{\mathrm{cap},\mathrm{dem}} \frac{\min_{S \subseteq V} \Phi(S;\mathrm{cap},\mathrm{dem})}{\mathsf{maxflow}(G;\mathrm{cap},\mathrm{dem})}\,.
$$
for the maximum ratio between the best upper bound given by \eqref{eq:sparse}
and the value of the flow, over all multi-commodity flow instances on $G$.
This is the {\em multi-commodity max-flow/min-cut gap for $G$}.  Now we can state
the fundamental relationship between the geometry of graphs and the flows they support:

\begin{theorem}[\cite{LLR95, GNRS99}]\label{thm:gap}
For every graph $G$, $c_1(G) = \mathsf{gap}(G)$.
\end{theorem}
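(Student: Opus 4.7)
The theorem asserts the equality $c_1(G) = \mathsf{gap}(G)$, so I would prove both inequalities separately. Both directions are LP-duality statements, built on the classical fact that the cone of finite $L_1$-embeddable (semi)metrics on $V$ coincides with the cone of non-negative combinations of cut semimetrics $\delta_S(u,v)=|\1_S(u)-\1_S(v)|$, together with the LP-dual formulation of maximum concurrent multi-commodity flow:
\[
\mathsf{maxflow}(G;\mathrm{cap},\mathrm{dem}) \;=\; \min_{\substack{d'\text{ pseudo-metric}\\\text{on }V}}\; \frac{\sum_{uv\in E}\mathrm{cap}(uv)\, d'(u,v)}{\sum_{u,v}\mathrm{dem}(u,v)\,d'(u,v)},
\]
while the right-hand minimum taken only over cut semimetrics equals the minimum cut sparsity $\min_{S} \Phi(S;\mathrm{cap},\mathrm{dem})$.

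\paragraph{Direction $\mathsf{gap}(G)\le c_1(G)$.}
I would fix an arbitrary instance $(\mathrm{cap},\mathrm{dem})$ on $G$, let $d^*$ attain the above minimum, and embed $(V,d^*)$ into $L_1$ with distortion $D\le c_1(G)$; write the resulting $L_1$ metric as a non-negative combination $\sum_i \lambda_i \delta_{S_i}$. The mediant (``averaging over fractions'') inequality yields
\[
\min_i \Phi(S_i;\mathrm{cap},\mathrm{dem}) \;\le\; \frac{\sum_i \lambda_i \sum_{uv\in E}\mathrm{cap}(uv)\delta_{S_i}(u,v)}{\sum_i \lambda_i \sum_{u,v}\mathrm{dem}(u,v)\delta_{S_i}(u,v)}.
\]
Since the numerator is bounded above by $D\cdot\sum_{uv\in E}\mathrm{cap}(uv)d^*(u,v)$ and the denominator below by $\sum_{u,v}\mathrm{dem}(u,v)d^*(u,v)$ (using $d^*\le \sum_i\lambda_i\delta_{S_i}\le D\cdot d^*$), the right-hand side is at most $D\cdot\mathsf{maxflow}(G;\mathrm{cap},\mathrm{dem})$. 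Thus $\mathsf{gap}(G)\le D\le c_1(G)$.

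\paragraph{Direction $c_1(G)\le \mathsf{gap}(G)$.}
Here I would dualize the $L_1$-embedding LP itself. For a fixed metric $d$ on $V$, the minimum distortion $c_1(V,d)$ is the optimum of the LP
\[
\min\,D\quad\text{s.t.}\quad \lambda\ge 0,\ \ d(u,v)\le \sum_i \lambda_i\delta_{S_i}(u,v)\le D\cdot d(u,v)\ \ \forall u,v.
\]
Its LP dual produces non-negative weights $\alpha,\beta:V\times V\to\mathbb{R}_{\ge 0}$ satisfying
\[
\sum_{u,v}\alpha(u,v)\delta_S(u,v)\le \sum_{u,v}\beta(u,v)\delta_S(u,v)\quad \forall S\subseteq V,
\]
with $c_1(V,d)=\sum_{u,v}d(u,v)\alpha(u,v)/\sum_{u,v}d(u,v)\beta(u,v)$. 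The cut constraints are precisely the min-cut feasibility constraints for routing $\alpha$ as demands in the network whose capacities are $\beta$; making $\beta$ supported on $E$ (by replacing each pair by a weighted shortest path in $G$, which does not increase the cut-sparsity objective because $d$ is a shortest-path metric) yields a flow instance on $G$ whose sparsity-to-maxflow ratio is at least $c_1(V,d)$. Taking the supremum over $d$ gives $\mathsf{gap}(G)\ge c_1(G)$.

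\paragraph{Main obstacle.}
The mechanical direction is the first one; the subtle step is in Direction 2, specifically the reduction that converts the optimal dual pair $(\alpha,\beta)$ into a bona fide flow instance on $G$ (rather than on the complete graph on $V$). The trick is to replace each $\beta$-edge $uv$ by $\beta(uv)$ units of capacity along a shortest $u$--$v$ path in $(G,\len)$; because $d=d_{\len}$ is the shortest-path metric, this routing preserves the numerator $\sum_{uv\in E}\mathrm{cap}(uv)d(u,v)$ up to triangle-inequality slack in the right direction, and every cut of $G$ separates this routed capacity by at least as much as the original $\beta$-capacity of the cut. Verifying that this reduction exactly preserves the ratio (and therefore the gap) is the one place where the shortest-path structure of $d$ on $G$ is genuinely used.
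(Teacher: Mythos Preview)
The paper does not prove this theorem; it is quoted as a known result from \cite{LLR95,GNRS99} and used as a black box. So there is no ``paper's own proof'' to compare against.

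That said, your outline is the standard argument from those references and is correct. Direction $\mathsf{gap}(G)\le c_1(G)$ is exactly the LLR rounding argument. In Direction $c_1(G)\le\mathsf{gap}(G)$, your identification of the key step is right: the dual of the distortion LP yields $(\alpha,\beta)$ on $V\times V$, and to get an instance on $G$ you push each $\beta(u,v)$ onto a shortest $u$--$v$ path $P_{uv}$ in $(G,\len)$, setting $\mathrm{cap}(e)=\sum_{u,v:\,e\in P_{uv}}\beta(u,v)$. Then $\sum_{e}\mathrm{cap}(e)\,\len(e)=\sum_{u,v}\beta(u,v)\,d(u,v)$ exactly (since $P_{uv}$ is shortest), so plugging $d'=d$ into the flow LP gives $\mathsf{maxflow}\le 1/c_1(V,d)$; and for every cut $S$ the routed capacity across $S$ is at least $\sum_{u,v}\beta(u,v)\delta_S(u,v)\ge\sum_{u,v}\alpha(u,v)\delta_S(u,v)$, so the sparsest cut is at least $1$. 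Hence the gap on this instance is at least $c_1(V,d)$. Your ``main obstacle'' paragraph slightly undersells how clean this is: the numerator is preserved \emph{exactly}, not merely ``up to triangle-inequality slack,'' precisely because $d$ is the shortest-path metric induced by $\len$.
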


In the general Sparsest Cut problem (also known as Sparsest Cut with general demands), given $G$, $\mathrm{cap}$, and $\mathrm{dem}$, we want to find a cut in $G$ of minimum sparsity.
Combined with the techniques of \cite{LR99, LLR95}, Theorem \ref{thm:gap} implies that
there exists a $c_1(G)$-approximation for the general Sparsest Cut problem on a graph $G$.
Motivated by this connection, Gupta, Newman, Rabinovich, and Sinclair sought to
{\em characterize} the graph families $\mathcal F$ such that $c_1(\mathcal F) < \infty$,
and they posed the following conjecture.
We will say that a family of graphs $\mathcal F$ {\em forbids some minor}
if there exists a graph $G$ that is not a minor of any graph in $\mathcal F$.

\begin{conjecture}[\cite{GNRS99}]
\label{conj:gnrs}
For every family of finite graphs $\mathcal F$, one has $c_1(\mathcal F) < \infty$
if and only if $\mathcal F$ forbids some minor.
\end{conjecture}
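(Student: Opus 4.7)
The plan is to split Conjecture~\ref{conj:gnrs} into its two directions. For necessity, if $\mathcal F$ forbids no minor we want $c_1(\mathcal F) = \infty$. The key observation is that $c_1$ is minor-monotone: if $H$ is a minor of $G$, then $c_1(H) \leq c_1(G)$. This follows because a minor realization gives disjoint connected subgraphs $\{V_h : h \in V(H)\}$ in $G$, and any length function on $H$ lifts to a pseudometric on $G$ by setting internal edges of each $V_h$ to length zero and branch edges to the length of the corresponding edge of $H$; any $L_1$-embedding of $G$ then restricts on the representative vertices to an $L_1$-embedding of $H$ of no worse distortion. By hypothesis, $\mathcal F$ contains arbitrarily large bounded-degree expanders $H_n$ as minors, and the Linial--London--Rabinovich lower bound $c_1(H_n) = \Omega(\log n)$ forces $c_1(\mathcal F) = \infty$.

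For sufficiency, minor-monotonicity reduces the task to showing $c_1(\mathrm{Forb}(H)) < \infty$ for every fixed graph $H$, where $\mathrm{Forb}(H)$ denotes the minor-closed family of graphs excluding $H$. I would focus first on the case the abstract flags as tractable: $H = T$ a tree, equivalently $\mathcal F$ omits some tree as a minor. The argument is a short chain. First, invoke the classical Bienstock--Robertson--Seymour--Thomas theorem that excluding a fixed forest $T$ as a minor forces pathwidth at most some constant $p(T)$. Second, apply the embedding theorem announced in the abstract: every graph in $\mathrm{Forb}(T)$ admits a random tree embedding of distortion $c(p(T))$, i.e.\ a distribution $\mu$ over non-contracting maps $f_{\tau} : V \to (\tau, d_\tau)$ into trees $\tau$ such that $\mathbb{E}_{\tau \sim \mu}[d_\tau(f_\tau(x), f_\tau(y))] \leq c(p(T)) \cdot d(x,y)$. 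Third, every tree metric embeds isometrically into $L_1$; concatenating those isometries across the support of $\mu$, weighted by $\mu$, produces a single $L_1$-embedding of the original graph with distortion at most $c(p(T))$. This yields $c_1(\mathrm{Forb}(T)) \leq c(p(T)) < \infty$, completing the sufficiency direction whenever $\mathcal F$ excludes some tree.

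The main obstacle is sufficiency in the remaining regime: $\mathcal F$ forbids some minor $H$ but contains every tree. The prototypical open instance is $H = K_5$, corresponding to planar graphs. Planar graphs have unbounded pathwidth (the $\sqrt{n} \times \sqrt{n}$ grid is planar), and the grid itself requires $\Omega(\log n)$ distortion to embed into any distribution over trees, so the pathwidth/random-tree route is structurally blocked. Any proof of the full conjecture must bypass trees entirely and embed such graphs directly into $L_1$ using genuinely two-dimensional geometry. This is precisely why the abstract applies the paper's embedding theorem only to families that omit some tree, and why the portion of Conjecture~\ref{conj:gnrs} corresponding to non-tree excluded minors remains open.
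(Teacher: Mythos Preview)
This statement is a conjecture; the paper does not prove it, and neither do you---as you correctly acknowledge in your final paragraph, the sufficiency direction for families that contain all trees (e.g., planar graphs) remains open.

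What you do argue matches the paper. Your necessity direction is the standard folklore argument (minor-monotonicity of $c_1$ plus the Linial--London--Rabinovich expander lower bound), which the paper takes for granted. Your sufficiency argument for tree-excluding $\mathcal F$ is exactly the paper's chain: Robertson--Seymour \cite{RS83} (your citation of Bienstock--Robertson--Seymour--Thomas works equally well) gives bounded pathwidth, Theorem~\ref{thm:pwmain} gives a stochastic embedding into trees, and $c_1(\mathsf{Trees})=1$ combined with Lemma~\ref{lem:randomreduce} finishes. Your diagnosis of why the random-tree route is blocked for, say, $K_5$-minor-free graphs is also correct and aligns with the discussion surrounding Theorem~\ref{thm:2sum}.

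One small omission in your minor-monotonicity sketch: when lifting a length function from $H$ to $G$ via the supernodes $\{V_h\}$, you specify what to do with internal edges and branch edges but not with edges of $G$ that were deleted in forming the minor (edges between supernodes not corresponding to an edge of $H$, or edges incident to deleted vertices). Assigning each such edge its induced shortest-path length---or anything at least that large---completes the argument, but it should be said.
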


We refer to Section \ref{sec:prelims} for a review of graph minors.
Progress on the preceding conjecture has been limited.
Classical work of Okamura and Seymour \cite{OS81} implies that
$c_1(\mathsf{Outerplanar}) = 1$, where $\mathsf{Outerplanar}$
denotes the class of outerplanar graphs (planar graphs where
all vertices lie on a single face).  Gupta, Newman, Rabinovich,
and Sinclair \cite{GNRS99} proved that $c_1(\mathsf{Treewidth}(2)) = O(1)$,
where $\mathsf{Treewidth}(k)$ denotes the family of all graphs
of treewidth at most $k$ (see, e.g. \cite{DiestelBook} for a discussion of treewidth, or
Section \ref{sec:prelims} for the relevant definitions).
This was improved to $c_1(\mathsf{Treewidth}(2))=2$ in \cite{LR10,CJLV08}.
Finally, in \cite{CGNRS06},  it is shown that
$c_1(\mathsf{Outerplanar}(k)) < 2^{O(k)}$ for all $k \in \mathbb N$,
where $\mathsf{Outerplanar}(k)$ denotes the class of $k$-outerplanar graphs.
We remark that a strengthening of Conjecture \ref{conj:gnrs}, regarding integer multi-commodity flows, has been investigated by Chekuri, Shepherd, and Weibel \cite{CSW10}.
The present paper is devoted to proving the following special case of
Conjecture \ref{conj:gnrs}.

\begin{theorem}\label{thm:pw}
Every minor-closed family of finite graphs $\mathcal F$ which does not contain
every possible tree satisfies $c_1(\mathcal F) < \infty$.  Equivalently,
the multi-commodity max-flow/min-cut
gap for $\mathcal F$ is uniformly bounded, i.e. $\mathsf{gap}(\mathcal F) < \infty$, whenever $\mathcal F$ has bounded pathwidth.
\end{theorem}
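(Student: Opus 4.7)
The plan is to reduce Theorem \ref{thm:pw} to three ingredients: (i) a structural theorem of Bienstock--Robertson--Seymour--Thomas relating forbidden-tree minors to bounded pathwidth, (ii) the main embedding result announced in the abstract (shortest-path metrics on pathwidth-$k$ graphs admit random embeddings into trees with distortion $c(k)$), and (iii) the Linial--London--Rabinovich / Gupta--Newman--Rabinovich--Sinclair correspondence $c_1(G)=\mathsf{gap}(G)$ which is already available in the excerpt.

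First I would establish the equivalence asserted in the statement. Suppose $\mathcal F$ is minor-closed and omits some tree $T_0$. Then $T_0$ is a forbidden minor for every $G\in\mathcal F$. By the theorem of Bienstock, Robertson, Seymour, and Thomas, for every forest $F$ there is a constant $p(F)$ such that every graph of pathwidth at least $p(F)$ contains $F$ as a minor; applied to $F=T_0$, this forces every $G\in\mathcal F$ to have pathwidth at most $p(T_0)$. Conversely, if $\mathcal F$ has pathwidth at most $k$, then arbitrarily large complete binary trees (which have unbounded pathwidth) cannot all lie in $\mathcal F$, so $\mathcal F$ fails to contain every tree. This shows the two hypotheses are equivalent for minor-closed families and replaces the problem by bounding $c_1$ on $\mathsf{Pathwidth}(k)$ for each fixed $k$.

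Next I would invoke the paper's main embedding theorem: for $G$ of pathwidth $k$ with shortest-path metric $d$, there is a probability distribution $\mu$ on trees $T$ and a (random) non-contracting map $f_T:V\to V(T)$ with $\E_{T\sim\mu}[d_T(f_T(u),f_T(v))]\le c(k)\cdot d(u,v)$ for all $u,v\in V$. Tree metrics embed isometrically into $L_1$ via their fundamental-cut representation, so composing $f_T$ with such an isometry and concatenating the resulting $L_1$-embeddings, weighted according to $\mu$ (with appropriate scaling to realize an expectation inside $L_1=L_1([0,1])$), yields a non-contracting map $F:V\to L_1$ with $\|F(u)-F(v)\|_1\le c(k)\cdot d(u,v)$. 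Hence $c_1(\mathsf{Pathwidth}(k))\le c(k)$, and by the theorem of Linial--London--Rabinovich and Gupta--Newman--Rabinovich--Sinclair, $\mathsf{gap}(\mathsf{Pathwidth}(k))\le c(k)$ as well. Combining with the structural reduction yields both conclusions of Theorem \ref{thm:pw}.

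The main obstacle is of course step (ii), the random tree embedding itself, which the rest of the paper is devoted to. I would expect its proof to proceed by induction on the pathwidth $k$ using a path decomposition $(B_1,\dots,B_\ell)$ of width $k$: the spine of the decomposition acts as a backbone, and the metric must be stitched together across consecutive bags by a randomized procedure that selects which edges (or paths) within each bag to cut, so that the resulting random subgraph is a tree while distances in expectation are only distorted by a factor depending on $k$. Controlling the probabilistic interactions between cuts made at different points along the spine, and ensuring that contracted pieces can themselves be handled by the inductive hypothesis (at a strictly smaller pathwidth), is where I anticipate the real technical work to lie. The reduction above is essentially bookkeeping once that theorem is in hand.
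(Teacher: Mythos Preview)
Your reduction is correct and matches the paper's: the equivalence via Robertson--Seymour (the paper cites \cite{RS83} rather than Bienstock--Robertson--Seymour--Thomas, but the content is the same), then Theorem~\ref{thm:pwmain} combined with Lemma~\ref{lem:randomreduce} and $c_1(\mathsf{Trees})=1$, and finally $c_1=\mathsf{gap}$.

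One point worth flagging, though it lies outside the proof of Theorem~\ref{thm:pw} proper: your expectation that Theorem~\ref{thm:pwmain} is proved by induction on $k$, reducing $\mathsf{Pathwidth}(k)$ to $\mathsf{Pathwidth}(k-1)$, is not only different from what the paper does but is provably impossible. The paper establishes (Theorem~\ref{thm:nonembed}) that $\mathsf{Pathwidth}(k+1)\cap\mathsf{Trees}\not\rightsquigarrow\mathsf{Pathwidth}(k)$ for every $k$, so no such inductive step exists. Instead, the paper fixes $k$ and walks along a linear width-$k$ composition sequence, maintaining a random subgraph whose only nontrivial $2$-connected component is the current $(k+1)$-clique; the analysis controls stretch via a rank parameter bounded by $\binom{k+1}{2}$, yielding a direct embedding into trees for each $k$ without ever passing through smaller pathwidth.
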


We remark that Theorem \ref{thm:pw} implies a polynomial-time $O(1)$-approximation algorithm for the general Sparsest Cut problem
on graphs of bounded pathwidth.
Recently, an $O(1)$-approximation algorithm for graphs of bounded treewidth has been obtained by Chlamtac, Krauthgamer, and Raghavendra \cite{CKR10}.
We also note that \cite{CKR10} uses a different approach, and does not establish an analogous bound on the multi-commodity max-flow/min-cut gap for graphs of bounded treewidth (which remains an important open problem).

\subsection{Simplifying the topology with random embeddings}

A basic question is whether one can embed a graph metric $G$ into a
graph metric $H$ with a simpler topology (for example,
perhaps $G$ is planar and $H$ is a tree), where the embedding
is required to have small distortion, i.e. such that every
pairwise distance changes by only a bounded amount.
The viability of this approach as a general method
was ruled out by Rabinovich and Raz \cite{RR98}.  For instance,
$\Omega(n)$ distortion is required to embed an $n$-cycle into a tree.
In general (see \cite{CG04}), if all metrics supported on a
subdivision of some graph $G$ can be embedded with distortion $O(1)$ into metrics
supported on
a family $\mathcal F$, then $G$ is a minor of some graph in $\mathcal F$,
implying that we have not obtained a reduction in topological complexity.

On the other hand, a classical example attributed to Karp \cite{Karp89} shows
that random reductions might still be effective:  If one removes
a uniformly random edge from the $n$-cycle, this gives an embedding
into a random tree which has distortion at most 2 ``in expectation.''
More formally, if $(X,d)$ is any finite metric space, and $\mathcal Y$
is a family of finite metric spaces, we say that {\em $(X,d)$ admits
a stochastic $D$-embedding into $\mathcal Y$} if there exists
a randomly chosen metric space $(Y,d_Y) \in \mathcal Y$ and a randomly chosen
mapping $F : X \to Y$ such that the following
two properties hold.
\begin{description}
\item[Non-contracting.] With probability one, for every $x,y \in X$, we have $d_Y(F(x),F(y)) \geq d(x,y)$.
\item[Low-expansion.] For every $x,y \in X$,
$$
\mathbb E\left[\vphantom{\bigoplus} d_Y(F(x),F(y))\right] \leq D \cdot d(x,y).
$$
\end{description}

For two graph families $\mathcal F$ and $\mathcal G$, we write $\mathcal F \rightsquigarrow \mathcal G$
if there exists a $D < \infty$ such that every metric supported on $\mathcal F$ admits
a stochastic $D$-embedding into the family of metrics supported on $\mathcal G$.  We will
write $\mathcal F \stackrel{D}{\rightsquigarrow} \mathcal G$ if we wish to emphasize the
particular constant.  Finally, we write $\mathcal F \not\rightsquigarrow \mathcal G$
if no such $D$ exists.  The relationship with Conjecture \ref{conj:gnrs} is given
by the following simple lemma (see, e.g. \cite{GNRS99}).

\begin{lemma}\label{lem:randomreduce}
If $\mathcal F \stackrel{D}{\rightsquigarrow} \mathcal G$, then
$c_1(\mathcal F) \leq D \cdot c_1(\mathcal G)$.
\end{lemma}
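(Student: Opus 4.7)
The plan is to take an arbitrary metric $(X,d)$ supported on a graph in $\mathcal F$, push it through the given stochastic embedding, independently apply the best possible $L_1$-embedding to each random realization, and then glue these per-realization embeddings into a single deterministic $L_1$-embedding by means of a weighted $\ell_1$-direct sum.

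First I would fix a finite metric $(X,d)$ supported on some $G \in \mathcal F$ and an error parameter $\e>0$; the conclusion is trivial if $c_1(\mathcal G)=\infty$, so assume it is finite. By hypothesis there is a random graph metric $(Y,d_Y)$ supported on $\mathcal G$ and a random map $F:X\to Y$ which is non-contracting pointwise and has $\E[d_Y(F(x),F(y))] \leq D\cdot d(x,y)$. Since $X$ is finite, a standard discretization reduces matters to a distribution supported on finitely many realizations $(Y^{(1)},d_1),\dots,(Y^{(N)},d_N)$ with probabilities $p_1,\dots,p_N$ and associated maps $F_i : X \to Y^{(i)}$, at the cost of an additional factor $1+\e$ in the expansion. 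For each $i$, the definition of $c_1(\mathcal G)$ furnishes a non-contracting map $f_i : Y^{(i)} \to L_1$ with
$$d_i(u,v) \leq \|f_i(u)-f_i(v)\|_1 \leq (c_1(\mathcal G)+\e)\, d_i(u,v).$$

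Next I would assemble these into one map $g : X \to L_1$ by taking the weighted $\ell_1$-direct sum
$$g(x) = \bigoplus_{i=1}^{N} p_i \cdot f_i(F_i(x)),\qquad \|g(x)-g(y)\|_1 = \sum_{i=1}^{N} p_i \,\|f_i(F_i(x))-f_i(F_i(y))\|_1.$$
The lower bound is immediate: each summand is at least $p_i \cdot d_i(F_i(x),F_i(y)) \geq p_i\cdot d(x,y)$ by the non-contracting properties of $f_i$ and of $F$, and summing the $p_i$ gives $\|g(x)-g(y)\|_1 \geq d(x,y)$. For the upper bound, each summand is at most $p_i(c_1(\mathcal G)+\e)\,d_i(F_i(x),F_i(y))$, and summing gives $(c_1(\mathcal G)+\e)\,\E[d_Y(F(x),F(y))] \leq (c_1(\mathcal G)+\e)\,D\cdot d(x,y)$. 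Letting $\e\to 0$ yields $c_1(X,d) \leq D\cdot c_1(\mathcal G)$, and taking the supremum over all metrics supported on $\mathcal F$ completes the proof.

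There is no genuine obstacle here; the only mildly delicate point is the reduction to a finitely supported stochastic embedding, which is needed to define the finite direct sum rigorously. This is handled either by the discretization argument above or, more cleanly, by replacing the finite direct sum with an $L_1$-valued integral against the law of the stochastic embedding (which makes sense because after composition with the $f_i$ the resulting map takes values in a single $L_1$ space).
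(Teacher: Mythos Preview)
Your argument is correct and is precisely the standard proof of this fact. The paper itself does not supply a proof of this lemma; it simply states it as a ``simple lemma'' and refers to \cite{GNRS99}, so there is nothing to compare against beyond noting that your weighted $\ell_1$-direct-sum construction is exactly the folklore argument that reference would yield.
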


At first glance, $\rightsquigarrow$ seems like a powerful operation; indeed, in \cite{GNRS99} it is proved
that $\mathsf{OuterPlanar} \embeds \mathsf{Trees}$, where $\mathsf{OuterPlanar}$ and $\mathsf{Trees}$
are the families of outerplanar graphs and connected, acylic graphs, respectively.
In general, if $L$ is a finite list of graphs, we will write $\mathcal E L$ for the
family of all graphs which do not have a member of $L$ as a minor.  The preceding
result can be restated as $\mathcal E\{K_{2,3}\} \embeds \mathcal E \{K_3\}$, where
$K_{n}$ and $K_{m,n}$ denote the complete and complete bipartite graphs, respectively.
Unfortunately, \cite{GNRS99} also showed that this cannot be pushed much further:
$\mathcal E \{K_4\} \not\rightsquigarrow \mathcal E\{K_3\}$.  Restated,
this means that even graphs of treewidth 2 cannot be stochastically embedded into trees.

These lower bounds were extended in \cite{CG04} to show that
$\mathsf{Treewidth}(k+3) \not\rightsquigarrow \mathsf{Treewidth}(k)$ for any $k \geq 1$.
Finally, in \cite{CJLV08}, these results are extended
to any family with a weak closure property, which we describe next.

\medskip
\noindent
{\bf Sums of graphs.}
We now introduce a graph operation which will be useful in stating our results.
Suppose that $H$ and $G$ are two graphs and $C_H, C_G$ are $k$-cliques in $H$ and $G$
respectively, for some $k \geq 1$.  One defines the {\em $k$-sum of $H$ and $G$} as the graph $H {\oplus_k} G$
which results from taking the disjoint union of $H$ and $G$ and then
identifying the two cliques $C_H$ and $C_G$, and possibly removing a subset
of the clique edges.  We remark that the notation is somewhat
ambiguous, as both the cliques and their identifications are implicit.
For a family
of graphs $\mathcal F$, we write $\oplus_k \mathcal F$ for the closure of $\mathcal F$
under $i$-sums for every $i=1,2,\ldots,k$.  With this notation in hand, we can state the following theorem.

\begin{theorem}[\cite{CJLV08}]
\label{thm:2sum}
If $\mathcal F$ and $\mathcal G$ are families of graphs and $\mathcal G$ is minor-closed,
then $\oplus_2 \mathcal F \rightsquigarrow \mathcal G$ implies $\mathcal F \subseteq \mathcal G$.
\end{theorem}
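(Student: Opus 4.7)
The plan is to prove the contrapositive: if some $H \in \mathcal F$ fails to lie in $\mathcal G$, then $\oplus_2 \mathcal F \not\rightsquigarrow \mathcal G$. Since $\mathcal G$ is minor-closed, $H \notin \mathcal G$ is equivalent to saying that no member of $\mathcal G$ contains $H$ as a minor, so the contradiction will come from exhibiting an $H$-minor inside some host graph $Y \in \mathcal G$.

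For each integer $N \geq 1$ I would construct a graph $H_N \in \oplus_2 \mathcal F$ in which each edge $uv$ of $H$ is replaced by an internally-disjoint path of $N$ links of weight $1/N$, so that the shortest-path metric of $H_N$ restricted to the distinguished copy of $V(H) \subseteq V(H_N)$ still equals $d_H$. Such a subdivision is produced by $2$-summing $H$ along each edge with a suitable path-like gadget, assembled from copies of $H$ via iterated $1$- and $2$-sums and equipped with the appropriate edge weights; this places $H_N$ in $\oplus_2 \mathcal F$. Now assume for contradiction that $\oplus_2 \mathcal F \stackrel{D}{\rightsquigarrow} \mathcal G$ and let $F : H_N \to Y$ be the resulting stochastic embedding with $Y \in \mathcal G$. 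Applying Markov's inequality pairwise to the $\binom{|V(H)|}{2}$ pairs of distinguished vertices and a union bound furnishes, with positive probability, a deterministic realization $(Y, F)$ with $Y \in \mathcal G$, $F$ non-contracting, and
\[
d_H(u,v) \leq d_Y(F(u), F(v)) \leq D' \cdot d_H(u,v) \qquad \text{for all } u,v \in V(H),
\]
where $D' := 2\binom{|V(H)|}{2} D$ depends only on $H$ and $D$, not on $N$.

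Designate branch vertices $\{F(u) : u \in V(H)\}$—pairwise distinct by non-contraction—and, for each edge $e = uv \in E(H)$, a geodesic $Q_e$ in $Y$ from $F(u)$ to $F(v)$, which has length at most $D'$. If for $N$ sufficiently large (depending only on $|V(H)|$ and $D'$) these geodesics can be chosen pairwise internally vertex-disjoint and avoiding branch vertices other than their endpoints, then $Y$ contains a subdivision of $H$, hence $H$ as a minor, contradicting $Y \in \mathcal G$. The main obstacle is exactly this disjointness claim, and the argument is volumetric in nature: along the subdivided path $P_e$, the $N-1$ interior images $F(m_1),\dots,F(m_{N-1})$ have consecutive distances in $[1/N, D'/N]$ in $Y$, so they must escort $Q_e$ closely. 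If $Q_e$ and $Q_{e'}$ crossed at an interior vertex $z$, a pigeonhole argument would locate interior $m \in P_e$ and $m' \in P_{e'}$ with $d_Y(F(m), F(m')) = O(D'/N)$, while by construction $d_{H_N}(m,m') = \Omega(1)$, contradicting non-contraction as soon as $N \gg D'$. Making this quantitative for all $\binom{|E(H)|}{2}$ pairs of edges simultaneously—rerouting $Q_e$ locally to avoid near-collisions and handling the case where $e$ and $e'$ share a branch endpoint (where collisions at that single branch vertex must be allowed)—is the principal technical step.
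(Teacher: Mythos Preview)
This theorem is not proved in the present paper; it is quoted from \cite{CJLV08}, so there is no ``paper's own proof'' to compare against here. The paper does, however, record one structural fact about that external proof (immediately after the theorem statement): it splits into the case where $\mathcal F$ contains a graph with a cycle, handled by the techniques of \cite{CJLV08}, and the case where $\mathcal F$ consists entirely of trees, handled by invoking Theorem~\ref{thm:nonembed} of the present paper.

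Your subdivision strategy is in the spirit of the first case, and it breaks exactly at the second. The key claim is that the $N$-subdivision $H_N$ lies in $\oplus_2\mathcal F$, obtained by $2$-summing a ``path-like gadget'' onto each edge of $H$. For this to subdivide an edge $uv$, you must $2$-sum a gadget $P\in\oplus_2\mathcal F$ along some edge $ab$ of $P$, identify $ab$ with $uv$, and delete the merged edge so that the only $u$--$v$ route runs through $P\setminus ab$. But when $\mathcal F$ consists only of trees, every graph in $\oplus_2\mathcal F$ is again a forest, so $ab$ is a bridge of $P$ and $P\setminus ab$ has $a$ and $b$ in different components; the $2$-sum then disconnects $u$ from $v$ rather than subdividing the edge. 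No ``path-like gadget assembled from copies of $H$'' can circumvent this, since $1$- and $2$-sums of trees never create a cycle. Thus the claim $H_N\in\oplus_2\mathcal F$ fails in the tree case, and your argument as written does not go through there. This is precisely why the paper isolates that case and supplies the separate lower bound of Theorem~\ref{thm:nonembed}.
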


In fact, one case of this theorem relies on Theorem \ref{thm:nonembed} proved in the present paper, which states that for every $k = 1,2,\ldots,$ we have $\mathsf{Trees} \cap \mathsf{Pathwidth}(k+1) \not\rightsquigarrow \mathsf{Pathwidth}(k)$,
where $\mathsf{Pathwidth}(k)$ denotes the class of pathwidth-$k$ graphs (see Section \ref{sec:prelims} for the relevant
definitions).

Theorem \ref{thm:2sum} implies, for example, that $\mathsf{Planar} \cap \mathsf{Treewidth}(k+1) \not\rightsquigarrow \mathsf{Treewidth}(k)$
for any $k \geq 1$,
where $\mathsf{Planar}$ is the family of planar graphs,
since planar graphs and bounded treewidth
graphs are both closed under 2-sums.  The assumptions of the preceding theorem imply
that even random embeddings are not particularly useful for reducing the topology when $\oplus_2 \mathcal F = \mathcal F$.
However, some recent reductions suggest that when $\oplus_2 \mathcal F \neq \mathcal F$,
the situation is more hopeful.

In \cite{CGNRS06}, it is proved
that $\mathsf{Outerplanar}(k) \rightsquigarrow \mathsf{Trees}$.  Perhaps more surprisingly, it is shown
in \cite{IS07} that $\mathsf{Genus}(g) \rightsquigarrow \mathsf{Planar}$,
where $\mathsf{Genus}(g)$ is the family of graphs embedded on an orientable surface of genus $g$, and $\mathsf{Genus}(0) = \mathsf{Planar}$.
Note that while trees and planar graphs are closed under 2-sums, neither $\mathsf{Outerplanar}(k)$
nor $\mathsf{Genus}(g)$ are for $k \geq 1$ and $g \geq 1$.

It should be noted that an extensive amount of work has been done on embedding
finite metric spaces into distributions over trees, where the distortion
is allowed to depend on $n$, the number of points in the metric space;
see, e.g. \cite{Bartal96,Bartal98,FRT04}.  These results are not particularly
useful for us since we desire bounds that are independent of $n$.

\subsection{Results and techniques}

We now discuss the main results of the paper, along with the techniques
that go into proving them.

In \cite{GNRS99}, it is proved that $c_1(\mathsf{Treewidth}(2)) < \infty$, and later
works \cite{LR10,CJLV08} nailed down the precise dependence
$c_1(\mathsf{Treewidth}(2)) = 2$.  Resolving whether
$c_1(\mathsf{Treewidth}(3))$ is finite
seems quite difficult, and is a well-known open problem.  In fact, perhaps the simplest
``width $3$'' problem (which was open until the present work)
involves the family $\mathsf{Pathwidth}(3)$
(recall that $\mathsf{Pathwidth}(k) \subseteq \mathsf{Treewidth}(k)$
denotes the family of graphs of pathwidth at most $k$; see Section \ref{sec:prelims}).
These families are fundamental in the graph minor theory (see e.g.~\cite{RS83,Lov06});
see Lemma \ref{lem:compk} for an inductive definition.

Our main technical theorem shows that graphs of bounded pathwidth
can be randomly embedded into trees.  In fact, the theorem shows something
slightly stronger, that the target trees themselves can be taken
to have bounded pathwidth.

\begin{theorem}\label{thm:pwmain}
For every $k \in \mathbb N$, $\mathsf{Pathwidth}(k) \rightsquigarrow \mathsf{Trees} \cap \mathsf{Pathwidth}(k)$.
Quantitatively, $$\mathsf{Pathwidth}(k) \overset{D}{\rightsquigarrow} \mathsf{Trees} \cap \mathsf{Pathwidth}(k)\,,$$ for some $D \leq (4k)^{k^3+1}$.
\end{theorem}

In particular, this verifies Conjecture \ref{conj:gnrs} for graphs of bounded pathwidth.
The quantitative bound of Theorem \ref{thm:pwmain} is likely far from tight.
Naively, one might hope that for $D \leq O(\log k)$, one
has $\mathsf{Pathwidth}(k) \overset{D}{\rightsquigarrow} \mathsf{Trees}$.
But, in fact, known results imply that the distortion must
satisfy $D \geq \Omega(k)$.
The $k$-th level diamond graph (see \cite{GNRS99}) 
has pathwidth $O(k)$ but it is shown in \cite{GNRS99}
that every stochastic embedding of this graph
into a distribution over trees incurs distortion $\Omega(k)$.

Robertson and Seymour \cite{RS83} showed that a minor-closed family $\mathcal F$
excludes a forest if and only if $\mathcal F \subseteq \mathsf{Pathwidth}(k)$
for some $k \in \mathbb N$.

\begin{corollary}
If $T$ is any tree, then $\mathcal E\{T\} \rightsquigarrow \mathsf{Trees}$.
\end{corollary}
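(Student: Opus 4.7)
The plan is to combine Theorem \ref{thm:pwmain} with the Robertson--Seymour characterization of minor-closed families excluding a forest, which is recalled in the paragraph immediately preceding the corollary.

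First I would observe that $\mathcal E\{T\}$, the family of graphs that do not contain $T$ as a minor, is minor-closed by definition (if $G$ has no $T$-minor and $H$ is a minor of $G$, then $H$ also has no $T$-minor by transitivity of the minor relation). Since $T$ is a tree, it is in particular a forest, so the Robertson--Seymour theorem \cite{RS83} cited in the paper yields an integer $k = k(T)$ such that
\[
  \mathcal E\{T\} \,\subseteq\, \mathsf{Pathwidth}(k).
\]

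Next I would invoke Theorem \ref{thm:pwmain} at this value of $k$, which gives $\mathsf{Pathwidth}(k) \rightsquigarrow \mathsf{Trees} \cap \mathsf{Pathwidth}(k)$. In particular, every metric supported on a graph in $\mathsf{Pathwidth}(k)$ admits a stochastic $D$-embedding into the family of metrics supported on $\mathsf{Trees} \cap \mathsf{Pathwidth}(k) \subseteq \mathsf{Trees}$, for some constant $D = D(k)$. Composing the two inclusions yields $\mathcal E\{T\} \stackrel{D}{\rightsquigarrow} \mathsf{Trees}$, which is the desired conclusion.

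There is essentially no obstacle here; the corollary is a purely formal consequence of the main theorem together with the structural result of Robertson and Seymour, and the only thing to verify is that $\rightsquigarrow$ behaves monotonically with respect to the source family (if $\mathcal F' \subseteq \mathcal F$ and $\mathcal F \stackrel{D}{\rightsquigarrow} \mathcal G$, then trivially $\mathcal F' \stackrel{D}{\rightsquigarrow} \mathcal G$, since every metric supported on $\mathcal F'$ is in particular supported on $\mathcal F$). The dependence of the distortion constant on $T$ enters only through the pathwidth bound $k(T)$ extracted from the Robertson--Seymour theorem.
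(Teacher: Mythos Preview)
Your proposal is correct and matches the paper's own reasoning: the corollary is stated immediately after the Robertson--Seymour characterization precisely because it follows by combining that result with Theorem~\ref{thm:pwmain}, and the paper offers no further argument. Your added remarks on minor-closure of $\mathcal E\{T\}$ and monotonicity of $\rightsquigarrow$ in the source family are sound and make explicit what the paper leaves implicit.
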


As a consequence of this, together with Lemma \ref{lem:randomreduce}, and the
elementary fact that $c_1(\mathsf{Trees})=1$, we resolve Conjecture \ref{conj:gnrs} whenever $\mathcal F$ forbids
some tree, yielding Theorem \ref{thm:pw}.
We remark that Theorem \ref{thm:pw} was unknown even for $\mathcal F = \mathsf{Pathwidth}(3)$.

In Section \ref{sec:nopath}, we complement our upper bound
by proving the following theorem.

\begin{theorem}\label{thm:nonembed}
For every $k \in \mathbb N$, $\mathsf{Pathwidth}(k+1) \cap \mathsf{Trees} \not\embeds \mathsf{Pathwidth}(k)$.
\end{theorem}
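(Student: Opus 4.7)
The plan is to construct, for each $k \in \mathbb{N}$, a sequence of trees $\{T_k^n\}_{n \geq 1}$ of pathwidth exactly $k+1$ such that the stochastic embedding distortion of $T_k^n$ into any pathwidth-$k$ graph grows without bound as $n \to \infty$. A natural starting point is the recursive \emph{ternary spider}: let $T_0^n$ be the path on $n$ vertices, and for $k \geq 1$ let $T_k^n$ be obtained by identifying one endpoint of each of three disjoint copies of $T_{k-1}^n$ with a new apex vertex. A direct induction based on the Robertson-Seymour three-branching characterization of tree pathwidth (a tree has pathwidth $\geq p+1$ iff some vertex has three components in its complement, each of pathwidth $\geq p$) yields $\mathrm{pw}(T_k^n) = k+1$, while $|V(T_k^n)| = \Theta(3^k n)$. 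Additional internal three-way branching structures can be appended along each branch to serve as scaffolding for the lower bound without raising the pathwidth.

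The core of the lower bound exploits the following geometric feature of pathwidth-$k$ graphs: any such host $H$ admits a $1$-Lipschitz ``sweep'' coordinate $\tau : V(H) \to \mathbb{Z}$, obtained from any width-$k$ path decomposition by assigning to each vertex the index of (say) the last bag containing it. Given a non-contracting map $f : T_k^n \to H$, I would project $f$ through $\tau$ and analyze the image at each recursive apex of $T_k^n$: at the apex of any $T_j^n$-subtree, a pigeonhole forces at least two of its three child branches to be swept to the same side of $\tau(f(\mathrm{apex}))$, and the non-contraction constraint between pairs of deep leaves of those two branches (whose pairwise tree distance is $\Theta(n)$ at the base level) forces the farther branch to be pushed outward in the $\tau$-direction by an additive amount growing with $n$. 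Descending through the $k$ nested layers of three-way branching compounds these additive stretches into a multiplicative distortion that grows with $n$.

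To lift this deterministic bound to the stochastic setting I exploit the transitive action of $\mathrm{Aut}(T_k^n)$ on siblings at every recursive level. Pre-composing any stochastic embedding $F$ of $T_k^n$ with a uniformly random tree automorphism yields a symmetrized stochastic embedding whose per-pair expected host distances equal those of $F$. The deterministic worst-case pair identified in the sweep argument then corresponds, after symmetrization, to a canonical pair $(u,v) \in V(T_k^n) \times V(T_k^n)$ on which $\mathbb{E}[d_Y(F(u),F(v))] \geq \alpha(n)\,d_T(u,v)$ for an unbounded function $\alpha(n)$, contradicting any uniform stochastic distortion $D$.

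The principal difficulty will be making the recursive pigeonhole argument quantitatively tight. A path decomposition of $H$ is free to interleave the images of distinct branches of $T_k^n$ along $\tau$, and a naive level-by-level application of the sweep argument can be absorbed by such interleaving, collapsing the claimed amplification to $O(1)$ (this is precisely the phenomenon that makes the case of a bare spider $T_1^n$ so delicate). The likely resolution is to formulate a weighted Poincar\'e-type inequality on pairs of vertices of $T_k^n$, supported on pairs of leaves lying in distinct deep branches and chosen so that any pathwidth-$k$ host is forced to violate it for large $n$, reducing the multi-layer amplification to a single averaging statement rather than a delicate level-by-level induction.
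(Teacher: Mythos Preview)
Your sweep-coordinate claim is false as stated. Assigning to each vertex of a pathwidth-$k$ host $H$ the index of the last bag containing it is \emph{not} $1$-Lipschitz in the shortest-path metric: adjacent vertices share a bag, but their last-bag indices can differ arbitrarily (e.g.\ $u$ in bags $1,\dots,100$ and $v$ in bag $1$ only). Even ignoring this, for a \emph{metric} host with arbitrary edge lengths the bag index is a purely combinatorial quantity with no Lipschitz relation to $d_H$. Without a genuine $1$-Lipschitz projection onto a line, the pigeonhole step (``two of three branches are pushed to the same side'') has no metric content, and the argument collapses at the very first level. You yourself flag the interleaving problem at the end, but the issue appears earlier than you indicate: there is no coordinate to interleave along in the first place.

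The paper's route avoids this entirely, and the mechanism is worth noting because it is not what you would guess. First, it \emph{uses the positive result} (Theorem~\ref{thm:pathwidth-into-trees}) to reduce the host class from arbitrary pathwidth-$k$ graphs to \emph{trees} of pathwidth $k$, at the cost of a constant depending only on $k$. This is the step that replaces your fictitious sweep coordinate with something real: a pathwidth-$k$ tree has an actual spine path $P$ whose removal leaves subtrees of pathwidth $\leq k-1$ (Lemma~\ref{lemma:pwd_path}), and metric distance to $P$ is well-defined. Second, the hard instances are not ternary: they are $\lceil\sqrt{m}\rceil$-ary spiders nested $k$ deep, so that at each level one has $\Theta(\sqrt{m})$ branches rather than three. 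The quantitative heart (Lemma~\ref{lem:close_to_P}) says that if many of these branches land near the spine $P$, then their images must be spread out along $P$ and the total stretch on the connecting paths is $\Omega(|I|^2 L)$; with only three branches this yields nothing useful. Third, the paper works throughout with \emph{average stretch over edges}, so the passage to stochastic embeddings is immediate by linearity of expectation---your symmetrization step is unnecessary, and in fact does not do what you claim, since a per-embedding worst-pair bound does not become a fixed-pair expected bound after averaging over automorphisms unless the deterministic bound was already an average over the orbit.

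In short: the reduction to tree hosts, the high-arity construction, and the average-stretch formulation are each doing real work that your outline does not supply, and the concrete tool you propose to replace them (the bag-index sweep) is broken.
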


This result serves two purposes.  First, it shows that our proof of Theorem \ref{thm:pwmain},
which embeds $\mathsf{Pathwidth}(k)$ directly into trees cannot proceed by inductively
reducing the pathwidth by one.  Secondly, it is needed in the proof of Theorem \ref{thm:2sum}
in the case when $\mathcal F$ contains only trees (the techniques of \cite{CJLV08} handle
the case when $\mathcal F$ contains at least one cycle).
We remark that, perhaps surprisingly, the proof our non-embeddability result (Theorem \ref{thm:nonembed}) uses our embedding result (Theorem \ref{thm:pwmain}).

\subsection{Preliminaries}
\label{sec:prelims}

We now review some basic definitions and notions which
appear throughout the paper.

\medskip
\noindent
{\bf Graphs and metrics.}
We deal exclusively with
finite graphs $G = (V,E)$ which are free of loops
and parallel edges.
We will also write $V(G)$ and $E(G)$ for
the vertex and edge sets of $G$, respectively.
A {\em metric graph} is a graph $G$
equipped with a non-negative length function on edges $\len : E \to \mathbb R_+$.
We will denote the pseudometric
space associated with a graph $G$ as $(V,d_G)$, where $d_G$ is the
shortest path metric according to the edge lengths.
Note that $d_G(x,y)=0$ may occur even when $x \neq y$, and also
if $G$ is disconnected, there will be pairs $x,y \in V$ with
$d_G(x,y)=\infty$.  We allow both possibilities throughout the paper.
An important point is that {\em all length functions in the paper
are assumed to be reduced,} i.e. they satisfy
the property that for every $e = (u,v) \in E$, $\len(e) = d_G(u,v)$.

Given a metric graph $G$, we extend the length function to
paths $P \subseteq E$ by setting $\len(P) = \sum_{e \in P} \len(e)$.
For a pair of vertices $a,b \in P$, we use the notation $P[a,b]$
to denote the sub-path of $P$ from $a$ to $b$.
We recall that for a subset $S \subseteq V$, $G[S]$
represents the induced graph on $S$.  For a pair of subsets $S,T \subseteq V$,
we use the notations $E(S,T) = \{ (u,v) \in E : u \in S, v \in T \}$
and $E(S) = E(S,S)$.  For a vertex $u \in V$,
we write $N(u) = \{ v \in V :  (u,v) \in E \}$.

\remove{
\medskip
\noindent
{\bf Paths and walks.}
A {\em path} is assumed to be a sequence of {\em distinct} vertices and
edges; without the distinctness condition, we have a {\em walk} instead.
If $G$ is unit weighted, the length of any path $\pi$, denoted
$\len(\pi)$, is simply the number of edges it contains.  If $G$ is
weighted, then $\len(\pi)$ is the sum of the edge lengths in $\pi$.  We
use $\fint(\pi)$ to denote the {\em interior of $\pi$}, i.e. the set of
vertices on $\pi$ other than its endpoints.  We shall sometimes use
$\pi$ to denote the set of vertices on $\pi$ or the set of edges on
$\pi$; such usage will be clear from the context.  If $x,y\in\pi$ and
$x$ precedes $y$, we shall denote the subpath of $\pi$ from $x$ to $y$
by $\pi[x:y]$.  For $u,v\in V$, we denote the set of all paths in $G$
from $u$ to $v$ by $\cP_{uv}$ (we do not specify $G$ itself in the
notation because it will be clear from the context).
If $\alpha$ is a walk from $u$ to $v$ and $\beta$ is a walk from $v$
to $w\in V$, we denote by $\alpha\circ\beta$ the concatenated walk
from $u$ to $w$ that travels along $\alpha$ and then $\beta$.

\medskip
\noindent
{\bf 2-sums and $\mathbf{\bar 2}$-sums.}
A standard construction in topological graph theory takes two disjoint
graphs $G = (V,E)$ and $H=(W,F)$ and constructs the {\em 2-sum} $G
\oplus_2 H$, which arises by first taking the disjoint union of $G$ and
$H$, and then choosing edges $e \in E$ and $f \in F$, identifying them,
together with their endpoints, and removing the resulting joined edge.
The notation $\oplus_2$ is ambiguous, as it doesn't specify how the
graphs are summed together, but will always be clear from context.  If
the joined edge is not removed, we refer to this as a $\bar 2$-sum.
Note that the $2$-sum is always a subgraph of the $\bar 2$-sum.
}

\medskip
\noindent
{\bf Graph minors.}
If $H$ and $G$ are two graphs, one says that
$H$ is a {\em minor} of $G$ if $H$ can be obtained from $G$
by a sequence of zero or more of the three operations:
edge deletion, vertex deletion, and edge contraction.
$G$ is said to be {\em $H$-minor-free} if $H$
is not a minor of $G$.  We refer to \cite{Lov06,DiestelBook}
for a more extensive discussion of the vast graph minor theory.

Equivalently, $H$ is a minor of $G$
if there exists a collection of disjoint sets $\{ A_v \}_{v \in V(H)}$
with $A_v \subseteq V(G)$ for each $v \in V(H)$,
such that each $A_v$ is connected in $G$, and there is an
edge between $A_u$ and $A_v$ whenever $(u,v) \in E(H)$.
A metric space $(X,d)$ is said to be {\em $H$-minor-free}
if it is supported on some $H$-minor-free graph.

\medskip
\noindent
{\bf Treewidth.} The notion of {\em treewidth} involves
a representation of a graph as a tree, called a tree
decomposition.  More precisely, a \emph{tree decomposition} of a
graph $G=(V, E)$ is a pair $(T, \chi)$ in which $T=(I, F)$ is a
tree and $\chi=\{\chi_i \mid i\in I\}$ is a family of subsets of
$V(G)$ such that (1) $\bigcup_{i\in I}\chi_i= V$; (2) for each
edge $e=\{u, v\} \in E$,
  there exists an $i\in I$ such that both $u$ and $v$
  belong to $\chi_i$; and (3) for all $v\in V$, the set of nodes
  $\{i\in I \mid v \in \chi_i\}$ forms a connected subtree of $T$.
To distinguish between vertices of the original graph $G$ and
vertices of $T$ in the tree decomposition, we call vertices of $T$
{\em nodes} and their corresponding $\chi_i$'s {\em bags}. The
maximum size of a bag in $\chi$ minus one is called the {\em width}
of the tree decomposition. The {\em treewidth} of a graph $G$
is the minimum width over all possible tree
decompositions of $G$.

\medskip
\noindent
{\bf Pathwidth.}
A tree decomposition is called a {\em path
decomposition} if  $T=(I, F)$ is a path.  The {\em pathwidth} of a
graph $G$ is the minimum width over all possible path decompositions
of $G$.  We will use the following alternate characterization.

\begin{definition}[Linear composition sequence]\label{defn:compk}
Let $k$ be a positive integer.
A sequence of pairs $(G_0, V_0), (G_1, V_1), \ldots, (G_t, V_t)$
is a {\em linear width-$k$ composition sequence for $G$} if $G_t = G$, $G_0$ is a $k$-clique with vertex set $V_0$, and
$(G_{i+1}, V_{i+1})$ arises from $(G_i, V_i)$ as follows:
Attach a new vertex $v_{i+1}$ to all the vertices of $V_i$ and choose $V_{i+1} \subseteq V_i \cup \{v_{i+1}\}$
so that $|V_{i+1}| = k$.
Observe that it is possible to have $V_{i+1}=V_i$.
We further note that for any $j\in \{1,\ldots,t\}$, we have $V(G_j) = V_0 \cup \{v_1,\ldots,v_j\}$.
\end{definition}

The following lemma is straightforward to prove.

\begin{lemma}\label{lem:compk}
A graph has pathwidth-$k$ if and only if it is a subgraph
of some graph possessing a linear width-$k$ composition sequence.
\end{lemma}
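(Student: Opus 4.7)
The plan is to prove both directions by direct translation between the two representations; the forward direction is immediate from the definition, while the reverse direction requires a brief normalization of a given path decomposition.

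For the easy direction, suppose $G$ is a subgraph of $H$ possessing a linear width-$k$ composition sequence $(G_0,V_0),\ldots,(G_t,V_t)$. I would set $B_0 = V_0$ and $B_i = V_{i-1} \cup \{v_i\}$ for $1 \leq i \leq t$, each of size at most $k+1$. Coverage is immediate since $V_0 = B_0$ and $v_i \in B_i$. Every edge of $G_0$ lies in $B_0$, and every edge added at step $i$ is incident to $v_i$ with the other endpoint in $V_{i-1}$, so it lies in $B_i$. For the running-intersection property, the condition $V_{i+1} \subseteq V_i \cup \{v_{i+1}\}$ implies that once a vertex drops out of the sequence $V_0,V_1,\ldots$ it can never reappear, so the set of bags containing any fixed vertex is a contiguous interval. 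Hence $(B_0,B_1,\ldots,B_t)$ is a path decomposition of $H$ (and of the subgraph $G$) of width $\leq k$.

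For the converse, given a path decomposition $(B_1,\ldots,B_m)$ of $G$ with $|B_i| \leq k+1$, I would first refine it to a \emph{nice} decomposition in which consecutive bags differ by exactly one insertion or one deletion; this standard refinement does not increase the width. Then I would pad the decomposition, operating on a supergraph $G^*$ obtained from $G$ by adding fresh auxiliary vertices, so that every bag has size exactly $k+1$ and consecutive bags overlap in exactly $k$ vertices. Concretely, if a bag has size less than $k+1$, I would introduce a dummy vertex that lives in only that bag; if a pure insertion would overflow a size-$(k+1)$ bag, I would precede it by deleting a dummy vertex; if a pure deletion would leave a bag of size less than $k$, I would follow it by inserting a dummy. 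Each such local modification preserves the three path decomposition axioms. Finally, let $H$ be the supergraph obtained from $G^*$ by turning each bag of the padded decomposition into a clique.

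To extract the composition sequence from the padded decomposition $(B_1^*,\ldots,B_N^*)$, I would pick any $v_1 \in B_1^*$ and set $V_0 = B_1^* \setminus \{v_1\}$, and for $i \geq 1$ let $v_{i+1}$ be the unique vertex of $B_{i+1}^* \setminus B_i^*$ and $V_i = B_i^* \cap B_{i+1}^*$. Then $|V_i|=k$, $V_i \subseteq V_{i-1} \cup \{v_i\} = B_i^*$, and $v_{i+1}$ is joined to every vertex of $V_i$ because $B_{i+1}^* = V_i \cup \{v_{i+1}\}$ is a clique in $H$. Together with the initial $k$-clique $G_0$ on $V_0$, this yields a linear width-$k$ composition sequence for $H$, of which $G$ is a subgraph. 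The only delicate point is the padding step; it is purely mechanical bookkeeping, but one should verify carefully that each insertion or removal of dummy vertices preserves coverage, edge containment, and the running-intersection property.
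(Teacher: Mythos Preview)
Your proposal is correct and follows essentially the same approach as the paper's proof sketch: in one direction you read off bags $V_{i-1}\cup\{v_i\}$ from the composition sequence, and in the other you normalize the path decomposition so that all bags have size exactly $k+1$ and consecutive bags differ in exactly one vertex, then read off the composition sequence. The paper simply asserts this normalization as a standard fact, whereas you outline a padding-with-dummies construction; your description of the padding is slightly underspecified (e.g., adding distinct dummies to two consecutive small bags could break the ``differ by one vertex'' property unless you interpolate), but you correctly flag this as routine bookkeeping, which is all the paper claims as well.
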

\begin{proof}[Proof sketch]
A path decomposition of width $k$ can be obtained from a width-$k$ composition sequence $(G_0,V_0),\ldots,(G_t,V_t)$ by setting for every $i\in \{1,\ldots,t\}$, the $i$-th bag to be $V_{i-1}\cup \{v_i\}$.
For the other direction, one can always assume that a pathwidth-$k$ graph admits a path depomposition of width $k$ such that every bag has size exactly $k+1$, and every two bags differ in exactly one vertex.
This immediately yields a linear width-$k$ composition sequence.
\end{proof}

\medskip
\noindent
{\bf Asymptotic notation.}
For two expressions $E$ and $F$, we sometimes use the notation $E \lesssim F$
to denote $E = O(F)$.  We use $E \approx F$ to denote the
conjunction of $E \lesssim F$ and $E \gtrsim F$.


\section{Warm-up:  Embedding pathwidth-2 graphs into trees}
\label{sec:warmup}

In this section, we prove that $\mathsf{Pathwidth}(2) \rightsquigarrow \mathsf{Trees}$,
as a warmup for the general case in Section \ref{sec:pathk}.
The pathwidth-$2$ case does not possess many of the difficulties
of the general case; in particular, it does not
require us to bound the stretch in multiple phases
(for which we introduce a rank parameter in the next section).
But it does show the importance of using
an inflation factor to blowup small edges,
in order for a certain geometric sum to converge.

\medskip

Let $G = (V,E)$ be a metric graph of pathwidth 2.
By Lemma \ref{lem:compk}, it suffices to give a probabilistic embedding for a graph $G$ possessing a linear width-$2$ composition sequence $(G_0, e_0), \ldots, (G_t, e_t)$, where $e_i$ plays the role of $V_i$ in Definition \ref{defn:compk}.
We will inductively embed $G$ into a distribution over its spanning trees.
First, we put $T_0=e_0$.
Now, let $T_i$ be a spanning tree of $G_i$, with $e_i \in E(T_i)$.
We will produce a random spanning tree $T_{i+1}$ of $G_{i+1}$ with $e_{i+1} \in E(T_{i+1})$ as follows.
Let $e_i = \{u,v\}$, and let $w^*$ be the newly attached vertex.
We also add the edges $\{u,w^*\}$, and $\{v,w^*\}$, so the resulting graph is not a tree.
We obtain a tree by randomly deleting either $\{u,w^*\}$, or $\{v,w^*\}$ as follows.
Let $\tau =12$; we refer to this constant as an ``inflation factor.''

There are two cases.

\begin{enumerate}
\item
If $e_i = e_{i+1}$, we
delete $\{u,w^*\}$ with probability $\frac{\len(u,w^*)}{\len(u,w^*)+\len(v,w^*)}$,
and otherwise we delete $\{w^*, v\}$.
\item
If $e_i \neq e_{i+1}$, assume (without loss of generality) that $e_{i+1} = \{v,w^*\}$.
In that case, we delete $\{u,w^*\}$ with probability
\begin{equation}\label{eq:prob}
\min\left\{\frac{\tau \len(u,w^*)}{\len(u,w^*) + \len(u,v)},1\right\},
\end{equation}
and otherwise we delete $\{u,v\}$.
\end{enumerate}
It is easy to see that if $T_i$ was a spanning tree, then so is $T_{i+1}$.
Furthermore, by construction $e_{i+1} \in E(T_{i+1})$.
Let $T = T_t$ be the final tree, and set $T_i=T$ for $i > t$.
It remains to bound the expected stretch in $T$.

\remove{
To this end, we will define an additional pseudometric $d_i$ on $V$ as follows:
$$
d_i(x,y) = \begin{cases}
d_{T_i}(x,y) & x,y \in V(G_i) \\
d_G(x,y) & x,y \notin V(G_i) \\
d_{T_i}(x, z) + d_G(z, y) & x \in V(G_i), y \notin V(G_i)\,,
\end{cases}
$$
where $z \in \{u,v\}$ is such that $x$ is in the same component as $z$ in $T_i \setminus e_i$.
In other words, $d_i(x,y) = d_{G \setminus E_i}(x,y)$, where $E_i = E(G_i) \setminus E(T_i)$
is the set of edges we have removed through stage $i$.
This follows because the edge $\{u,v\}$ is present in $T_i$,
our length function on $G$ is reduced, and $\{u,v\}$ separates $V(G_i) \setminus \{u,v\}$ from $V \setminus V(G_i)$.

\medskip
}

For every edge $\{x,y\} \in E(G_i)$ and $i \geq 0$,
define the value,
$$
K_i^{x,y} = \max \left\{ \E\left[\frac{d_T(x,y)}{d_{T_i}(x,y)} \,\Big|\, T_i = \Gamma \right] : \pr(T_i=\Gamma) > 0 \right\}\,.
$$
This is the maximum expected stretch between $x$ and $y$ incurred over all stages later than $i$,
conditioned on the worst possible configuration for $T_i$.

For each $x \in V$, define $s(x)=-1$ for $x \in V(G_0)$, and otherwise
it is the unique value $s \geq 0$ such that $x \in V(G_{s+1}) \setminus V(G_s)$.
Also define $s(x,y)=\max(s(x),s(y))$.
The next two lemmas form the core of our analysis.

\remove{
For any $x,y\in V$,
we use $K_i^{x,y}$ to represent the stretch incurred on the pair $x,y$ after the end of step $i$ until the end of the algorithm.
Formally, we define the random variable,
\[
K_i^{x,y} = \frac{d_T(x,y)}{d_i(x,y)}\,.
\]
Observe that,
\begin{equation}\label{eq:product}
K_i^{x,y} = \frac{d_{i+1}(x,y)}{d_i(x,y)} K_{i+1}^{x,y}\,.
\end{equation}
We will employ the notation $\E_i\left[\cdot\right] = \E\left[\cdot\mid T_i\right]$.

\medskip
\noindent
{\bf The blame principle.}
We make one more important point which will arise soon.  Consider
a pair of vertices $x,y \in V$.
Suppose that there exists a vertex $u \in V$
with $\E_i[K_i^{x,u}]=1$, and which satisfies
the following two conditions with probability one, conditioned on $T_i$:
\begin{eqnarray*}
d_i(x,y) &=& d_i(x,u) + d_i(u,y)\,, \\
d_{i+1}(x,y) &=& d_{i+1}(x,u) + d_{i+1}(u,y)\,.
\end{eqnarray*}
Then we have,
\begin{equation}\label{eq:wlog}
\frac{\E_i\left[K_i^{x,y}\right]}{\E_i \left[K_{i+1}^{x,y}\right]}
\leq
\frac{\E_i\left[K_i^{u,y}\right]}{\E_i \left[K_{i+1}^{u,y}\right]}
\end{equation}

Using our assumptions, we can write,
\begin{eqnarray*}
\E_i [K_i^{x,y}] &=& \frac{d_i(x,u)}{d_i(x,y)} + \frac{d_i(u,y)}{d_i(x,y)} \E_i[K_i^{u,y}]\, \\
\E_i [K_{i+1}^{x,y} ]&=& \frac{d_{i}(x,u)}{d_{i+1}(x,y)} + \frac{d_{i+1}(u,y)}{d_{i+1}(x,y)} \E_i[K_{i+1}^{u,y}]\,,
\end{eqnarray*}
where in the final equality we have used $d_{i+1}(x,u) = d_i(x,u)$
since $\E_i[K_i^{x,u}]=1$.
Now, observing that $d_{i+1}(x,y) \geq d_i(x,y)$ and $\E_i[K_i^{u,y}] \geq \E_i[K_{i+1}^{u,y}]$, we conclude
that \eqref{eq:wlog}
follows from the general fact that
$$
\frac{1-p+p \alpha}{1-q+q\beta} \leq \frac{\alpha}{\beta}
$$
whenever $0 \leq p \leq q \leq 1$ and $\alpha \geq \beta \geq 1$,
as can easily be checked by cross multiplying.

\medskip

We now move on to the analysis.  Consider any vertices $x,y \in V(G_i)$.
}

\begin{lemma}\label{lem:pwd2_first}
If $\{x,y\} \in E$ and $s(x,y)=i$, then
\begin{equation}\label{eq:pwd2}
\E\left[d_T(x,y) \right] \leq 3\tau \cdot K_{i+1}^{x,y} \cdot \len(x,y)\,.
\end{equation}
\end{lemma}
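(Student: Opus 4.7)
The plan is to decouple the randomness introduced \emph{at step $i+1$}---where the edge $\{x,y\}$ first appears inside a triangle---from all subsequent randomness, which is exactly what $K_{i+1}^{x,y}$ encapsulates. Indeed, the law of total expectation combined with the definition of $K_{i+1}^{x,y}$ gives
$$\E[d_T(x,y)] = \E\bigl[\E[d_T(x,y) \mid T_{i+1}]\bigr] \leq K_{i+1}^{x,y} \cdot \E[d_{T_{i+1}}(x,y)],$$
so it suffices to prove $\E[d_{T_{i+1}}(x,y)] \leq 3\tau \cdot \len(x,y)$. The case $s(x,y) = -1$ is immediate, since then $\{x,y\} = e_0 \in T_0 \subseteq T_{i+1}$. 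So assume $s(x,y) = i \geq 0$. One of the endpoints, say $y$, equals the newly-attached vertex $w^* = v_{i+1}$, and because $w^*$ is attached only to $V_i = \{u,v\} = e_i$, the other endpoint $x$ must lie in $\{u,v\}$.

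I would then split on the two cases of the algorithm. In Case~1 ($e_i = e_{i+1}$), the two candidate deletions are symmetric and the probabilities are proportional to $\len(u,w^*), \len(v,w^*)$; a short calculation using the reduced triangle inequality $\len(u,v) \leq \len(u,w^*) + \len(v,w^*)$ gives $\E[d_{T_{i+1}}(u,w^*)] \leq 3\,\len(u,w^*)$. In Case~2 ($e_i \neq e_{i+1}$), WLOG $e_{i+1} = \{v,w^*\}$. If $x = v$, the edge $\{x,w^*\}$ coincides with $e_{i+1}$ and is forced into $T_{i+1}$, so there is nothing to prove. The substantive subcase is $x = u$: with probability $p$ the edge $\{u,w^*\}$ is deleted, forcing a detour of length $\len(u,v) + \len(v,w^*)$, and with probability $1-p$ it survives.

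The main obstacle---and the entire reason for the inflation factor $\tau$---is this last subcase. A uniform choice would blow up the stretch whenever $\len(u,w^*) \ll \len(u,v)$; the definition
$$p = \min\!\left\{\frac{\tau\,\len(u,w^*)}{\len(u,w^*) + \len(u,v)},\, 1\right\}$$
is tuned precisely so that the probability of the costly detour scales with $\len(u,w^*)$ itself rather than $\len(u,v)$. Using $\len(v,w^*) \leq \len(u,v) + \len(u,w^*)$ to bound $\len(u,v) + \len(v,w^*) - \len(u,w^*) \leq 2\,\len(u,v)$, one obtains
$$\E[d_{T_{i+1}}(u,w^*)] \leq \len(u,w^*) + 2p\,\len(u,v) \leq (2\tau + 1)\,\len(u,w^*),$$
where the boundary regime $p = 1$ is handled directly by the implication $\len(u,v) \leq (\tau-1)\,\len(u,w^*)$. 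Since $2\tau + 1 \leq 3\tau$ for $\tau \geq 1$, this closes all cases and, after multiplying by $K_{i+1}^{x,y}$, yields the lemma. I do not anticipate any deeper difficulty: the statement is essentially a careful one-step triangle-inequality computation, with $\tau$ absorbing the single worst-case direction in Case~2.
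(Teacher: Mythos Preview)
Your proposal is correct and follows essentially the same approach as the paper: decouple the step-$(i{+}1)$ randomness from later steps via $K_{i+1}^{x,y}$, then bound $\E[d_{T_{i+1}}(x,y)]$ by a case analysis on whether $e_{i+1}=e_i$ and on which endpoint of $e_i$ equals $x$. If anything, your Case~2 computation (yielding the sharper constant $2\tau+1$) is more explicit than the paper's one-line ``the expected stretch is inflated by at most a factor of~$\tau$.''
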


\begin{proof}
If $x,y \in V(G_0)$, then $s(x,y)=-1$ and $\E [d_T(x,y)]=K_0^{x,y} \cdot \len(x,y)$ by definition.
Otherwise, assume without loss of generality that $s(x) < s(y)$.
In this case, it must be that $x \in e_i = \{u,v\}$ and $y = w^*$.
Suppose that $x = u$.

If $e_{i+1}=e_i$, an elementary calculation based on case (1) of our algorithm yields,
$$
\E \left[ \frac{d_{T_{i+1}}(u,w^*)}{\len(u,w^*)} \,\Big|\, T_i\right] \leq \frac{3\, \len(v,w^*) + \len(u,w^*)}{\len(v,w^*)+\len(u,w^*)} \leq 3\,,
$$
from which $\E [d_T(u,w^*)] \leq 3\, K_{i+1}^{u,w^*} \cdot \len(u,w^*)$ immediately follows.

Similarly, if $e_{i+1}=\{v,w^*\}$, then the expected stretch is inflated by at most a factor of $\tau$, and therefore \eqref{eq:pwd2} again follows
by a similar calculation.
Finally, if $e_{i+1}=\{u,w^*\}$, then $\{u,w^*\}\in E(T_{i+1})$, and therefore $\E[d_T(u,w^*)] \leq K_{i+1}^{u,w^*} \cdot \len(u,w^*)$.
\end{proof}

\begin{lemma}\label{lem:pwd2_next}
For any $\{x,y\} \in E(G_i)$, we have $K_i^{x,y}\leq \max\{3,K_{i+1}^{a,b}\}$ for some $\{a,b\} \in E(G_i)$.
\end{lemma}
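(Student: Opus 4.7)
I plan to condition on the worst-case $T_i = \Gamma^*$ realizing the maximum in the definition of $K_i^{x,y}$, and decompose over the algorithm's random choice at step $i+1$. The key observation is that, for $\{x,y\} \in E(G_i)$ with $x,y \in V(G_i)$, the distance $d_{T_{i+1}}(x,y)$ can differ from $d_{T_i}(x,y)$ only in Case 2 of the algorithm, when the edge $e_i = (u,v)$ is deleted \emph{and} $(u,v)$ lies on the $T_i$-path $P^{T_i}_{xy}$ (which includes the possibility $\{x,y\}=e_i$). In every other outcome the path is untouched, so by the tower property $K_i^{x,y} \leq K_{i+1}^{x,y}$, and the lemma holds with $\{a,b\}=\{x,y\} \in E(G_i)$.

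The interesting case is when the ``change event'' (deletion of $(u,v)$ with $(u,v) \in P^{T_i}_{xy}$) has positive probability $p$. Writing $\alpha = \len(u,v)$, $\beta = \len(u,w^*)$, $\gamma = \len(v,w^*)$, in the change event we have $d_{T_{i+1}}(x,y) = d_{T_i}(x,y) + (\beta+\gamma-\alpha)$, while in the complementary event $d_{T_{i+1}}(x,y)=d_{T_i}(x,y)$. I will decompose
\[
K_i^{x,y} = (1-p)\,\E\!\left[d_T(x,y)/d_{T_i}(x,y) \mid \text{no change}, T_i\right] + p\,\E\!\left[d_T(x,y)/d_{T_i}(x,y) \mid \text{change}, T_i\right].
\]
The no-change term is at most $(1-p)\,K_{i+1}^{x,y}$. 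For the change term, I need to show it is bounded by an absolute constant (at most $3$), so that the convex combination yields the claimed $\max\{3, K_{i+1}^{x,y}\}$, again with $\{a,b\}=\{x,y\}$.

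The bound on the change term uses the inflation factor $\tau=12$ from \eqref{eq:prob}. When $p > 0$, one has $\tau\beta < \alpha+\beta$, i.e.\ $\beta < \alpha/(\tau-1)$; combined with $d_{T_i}(x,y)\geq \alpha$ (since $(u,v) \in P^{T_i}_{xy}$) and the reduced-length inequality $\alpha \leq \beta+\gamma$, a direct calculation gives
\[
1 + \frac{\beta+\gamma-\alpha}{d_{T_i}(x,y)} \leq 1 + \frac{\beta+\gamma}{\alpha} \leq 2 + \frac{1}{\tau-1} \leq 3.
\]
Thus the one-step expected fractional expansion in the change event is at most $3$. The convex combination of $(1-p)K_{i+1}^{x,y}$ and a quantity at most $3$ is at most $\max\{3, K_{i+1}^{x,y}\}$, which is the claim.

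The main obstacle is avoiding multiplicative compounding. The naive chain rule $\E[d_T/d_{T_i}] \leq \E[d_{T_{i+1}}/d_{T_i}] \cdot K_{i+1}^{x,y}$ yields only $K_i^{x,y} \leq 3\,K_{i+1}^{x,y}$, which under iteration across the $t$ stages of the composition would blow up exponentially. Sharpening to the additive/max form $\max\{3, K_{i+1}^{x,y}\}$ --- which iterates harmlessly, proving $K_0^{x,y} \leq 3$ uniformly --- requires decoupling the change-event's contribution from the factor $K_{i+1}^{x,y}$. This decoupling is made possible exactly by the inflation factor $\tau$ in \eqref{eq:prob}, which is calibrated so that the deletion probability is small enough when the detour is long relative to $\alpha$ to keep the absolute contribution of the change event bounded, rather than merely controlled by $K_{i+1}^{x,y}$.
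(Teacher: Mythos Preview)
Your decomposition into change/no-change events is the right skeleton, but the roles of the two branches are reversed, and this makes the argument incorrect as written.

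You claim that the change-event term $\E[d_T(x,y)/d_{T_i}(x,y)\mid \text{change},T_i]$ is bounded by the absolute constant $3$. What you actually compute is the \emph{one-step} expansion $d_{T_{i+1}}(x,y)/d_{T_i}(x,y)$, which is indeed $\leq 1+(\beta+\gamma)/\alpha\leq 2+\tfrac{2}{\tau-1}<3$. But after the change event the new $x$--$y$ path in $T_{i+1}$ passes through the edge $e_{i+1}=\{v,w^*\}$, which is precisely the edge eligible for future deletion. Hence the full ratio $d_T/d_{T_i}$ in the change branch is at most $(\text{one-step expansion})\cdot K_{i+1}^{x,y}$, not an absolute constant. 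Plugging your two bounds in gives only $K_i^{x,y}\le (1-p)K_{i+1}^{x,y}+3p\,K_{i+1}^{x,y}$, which is the multiplicative compounding you yourself flag as insufficient.

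The correct decoupling is the opposite one. In the \emph{no-change} branch (Case~2, $e_i=\{u,v\}$ survives), the vertex $u$ leaves the active pair forever, so the edge $\{u,v\}$---and with it the entire $x$--$y$ path in $T_i$---is never touched again; the ratio there is exactly $1$, not merely $\le K_{i+1}^{x,y}$. This is the crucial structural fact you are missing. With it (and after reducing to the pair $\{u,v\}$, which simplifies the one-step expansion to $(A+C)/B\le(2A+B)/B$), the paper obtains
\[
K_i^{u,v}\ \le\ \frac{\tau A}{A+B}\cdot 1\ +\ \Bigl(1-\frac{\tau A}{A+B}\Bigr)\frac{2A+B}{B}\,K_{i+1}^{u,v},
\]
and the work is then to show that the coefficient of $K_{i+1}^{u,v}$ is at most $1-\tfrac{\tau A}{3B}$, so that the right-hand side is $\le\tau\tfrac{A}{B}\bigl(1-\tfrac{1}{3}K_{i+1}^{u,v}\bigr)+K_{i+1}^{u,v}\le\max\{3,K_{i+1}^{u,v}\}$. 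The inflation factor $\tau=12$ is calibrated for \emph{this} inequality, not for bounding the change branch by $3$ outright.
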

\begin{proof}
Let $\Gamma$ be a tree on $V(G_i)$ which is a maximizer for $K_i^{x,y}$.
Let $\Gamma_u$ and $\Gamma_v$ be the subtrees of $\Gamma \setminus e_i$ rooted at $u$ and $v$ respectively, where
we recall that $e_i = \{u,v\}$.
If $x$ and $y$ are both either in $\Gamma_u$, or in $\Gamma_v$, then $K_i^{x,y}=1$,
since $\Gamma_u$ and $\Gamma_v$ remain intact in the final tree $T$, conditioned on $T_i=\Gamma$.

So, it suffices to consider the case $x \in \Gamma_u$ and $y \in \Gamma_v$.
Observe further that since the unique path between $x$ and $y$ in $\Gamma$ passes through $\{u,v\}$, and the $x$-$u$ and $y$-$v$ paths
will both remain in $T$, we have
$$
\E\left[\frac{d_T(x,y)}{d_{T_i}(x,y)} \,\Big|\, T_i = \Gamma \right]
\leq
\E\left[\frac{d_T(u,v)}{d_{T_i}(u,v)} \,\Big|\, T_i = \Gamma \right] \leq
K_i^{u,v}\,.
$$

Thus to prove the lemma, it suffices to show that $K_i^{u,v} \leq \max \{3, K_{i+1}^{u,v}\}$.
To this end, let $\Gamma$ be the maximizer for $K_i^{u,v}$, and suppose that $T_i=\Gamma$.
If $e_{i+1} = e_i$, then the edge $\{u,v\}$ remains intact (i.e.~$\{u,v\}\in E(T_{i+1})$), and therefore $K_i^{u,v} \leq K_{i+1}^{u,v}$.
Assume now that $e_{i+1}\neq e_i$, which means that we are in case (2) of the algorithm.
Assume further, without loss of generality, that $e_{i+1}=\{v,w^*\}$.
Recall that either $\{u,v\}$ or $\{u,w^*\}$ is deleted.



Let $A = \len(u,w^*), B = \len(u,v), C = \len(v,w^*)$.
With probability $p=\min\{1,\frac{\tau A}{A+B}\}$, the edge $\{u,w^*\}$ is deleted, in which case $d_T(u,v)=d_{T_i}(u,v)$.
With probability $1-p$, the edge $\{u,v\}$ is deleted, and the new path between $u$ and $v$ in $T_{i+1}$ is $u$-$w^*$-$v$,
so the distance between $u$ and $v$ is stretched to $A+C \leq 2A+B$,
and is eligible to be stretched by at most a factor $K_{i+1}^{u,v}$ in the future.

Thus, if $A \geq B/(\tau-1)$, we have $K_{i}^{u,v}=1$.
We can therefore assume $A < B/(\tau-1)$.
Thus we can bound,
\begin{eqnarray*}
K_i^{u,v} &\leq& \frac{\tau A}{A+B} + K_{i+1}^{u,v} \left(1-\frac{\tau A}{A+B}\right)\frac{2A+B}{B} \\
&\leq& \tau\frac{A}{B} + K_{i+1}^{u,v} \left(1-\frac{\tau A}{2 B}\right) \left(1+2\frac{A}{B}\right) \\
&\leq & \tau\frac{A}{B} + K_{i+1}^{u,v} \left(1-\frac{\tau A}{3 B}\right),
\end{eqnarray*}
where we have used $1-\frac{\tau A}{2B}+2\frac{A}{B} \leq 1-\frac{\tau A}{3B}$ since $\tau=12$.
But now one sees that,
$$
K_i^{u,v} \leq 3 \frac{\tau A}{3B} + K_{i+1}^{u,v} \left(1-\frac{\tau A}{3B} \right) \leq \max\{3,K_{i+1}^{u,v}\}.
$$
\end{proof}

Finally, the next lemma completes our analysis.

\begin{lemma}
For any $x,y\in V$, we have $\E[d_T(x,y)]\leq 9\tau \cdot d_G(x,y)$.
\end{lemma}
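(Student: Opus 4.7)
The plan is to first bound the expected distortion on a single edge of $G$ by $9\tau$, and then extend to arbitrary pairs by summing along a shortest path in $G$. Concretely, I would combine Lemmas \ref{lem:pwd2_first} and \ref{lem:pwd2_next} to handle the edge case, and then use the triangle inequality in the random tree $T$ together with linearity of expectation for the general case.

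For the edge case, Lemma \ref{lem:pwd2_first} already yields the target bound up to the residual factor $K_{i+1}^{x,y}$, where $i = s(x,y)$. To control this factor, I would iterate Lemma \ref{lem:pwd2_next} forward from step $i+1$: each application replaces $K_j^{x,y}$ by $\max\{3, K_{j+1}^{a,b}\}$ for some edge $\{a,b\} \in E(G_j)$. Since edges are only added and never deleted along the composition sequence, $\{a,b\}$ continues to lie in $E(G_{j+1})$, so the lemma applies again. Iterating until $j = t$, where $T_t = T$ forces $K_t^{\cdot,\cdot} = 1$ as a trivial base case, I obtain $K_j^{x,y} \leq 3$ for every $j$ and every edge $\{x,y\} \in E(G_j)$. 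Plugging this back into Lemma \ref{lem:pwd2_first} yields $\E[d_T(x,y)] \leq 3\tau \cdot 3 \cdot \len(x,y) = 9\tau \cdot \len(x,y)$ for every edge $\{x,y\} \in E$.

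For general $x, y \in V$, I would fix a shortest $x$-$y$ path $x = v_0, v_1, \ldots, v_\ell = y$ in $G$, which satisfies $\sum_{j=0}^{\ell-1} \len(v_j, v_{j+1}) = d_G(x,y)$. Since $T$ is a weighted tree, the triangle inequality gives the pointwise bound $d_T(x,y) \leq \sum_{j=0}^{\ell-1} d_T(v_j, v_{j+1})$; linearity of expectation together with the per-edge bound then yield $\E[d_T(x,y)] \leq \sum_{j} 9\tau \cdot \len(v_j, v_{j+1}) = 9\tau \cdot d_G(x,y)$, as required.

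The only delicate point is the iteration of Lemma \ref{lem:pwd2_next}, and in particular the observation that the edge $\{a,b\}$ produced at each step survives in every later graph of the composition sequence, so that the chain of inequalities genuinely telescopes down to the trivial base case at $j = t$. Once this observation is in hand, the remainder of the argument is essentially bookkeeping.
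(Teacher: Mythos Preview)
Your proposal is correct and follows essentially the same approach as the paper: reduce to edges by the triangle inequality and linearity of expectation, then use Lemma~\ref{lem:pwd2_next} iteratively (the paper phrases it as reverse induction on $i$) to obtain $K_i^{x,y}\le 3$ for every edge $\{x,y\}\in E(G_i)$, and finally plug this into Lemma~\ref{lem:pwd2_first}. Your observation that the edge $\{a,b\}$ produced by Lemma~\ref{lem:pwd2_next} persists in later $G_j$'s is exactly what makes the reverse induction go through in the paper as well.
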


\begin{proof}
By the triangle inequality and linearity of expectation, it suffices to prove the lemma for edges $\{x,y\} \in E$.
We will prove the following by reverse induction on $i$:
For every $\{x,y\} \in E$ and $i \geq s(x,y)+1$, we have $K_{i}^{x,y} \leq 3$.
Combining this with Lemma \ref{lem:pwd2_first} will complete the proof.

The claim is trivial for $i=t$ since $K_t^{x,y}=1$ for all $\{x,y\} \in E$.
If $t > i \geq s(x,y)+1$, then $\{x,y\} \in E(G_i)$, and
Lemma \ref{lem:pwd2_next} immediately implies that $K_i^{x,y} \leq \max \{3, K_{i+1}^{a,b}\}$
for some $a,b$ with $i+1 \geq s(a,b)+1$.
By induction, $K_{i+1}^{a,b} \leq 3$, hence $K_i^{x,y} \leq 3$ as well.
\end{proof}

\section{Embedding pathwidth-$k$ graphs into trees}
\label{sec:pathk}

We now turn to graphs of pathwidth $k$ for some $k \in \mathbb N$.
Let $G$ be such a graph.
By Lemma \ref{lem:compk}, we may assume that
$G$ has a linear width-$k$ composition sequence, $(G_0,V_0),\ldots(G_t,V_t)$.
For $i \geq 1$, we define $\widehat V_i = V_{i-1} \cup \{v_i\}$.
Our algorithm for embedding $G$ into a random tree proceeds inductively along the composition sequence.
For each $i\in \{1,\ldots,t\}$ we compute a subgraph $H_i$ of $G_i$, whose only non-trivial 2-connected component is a $(k+1)$-clique on $\widehat V_i$ (see Figure \ref{fig:Hi}).
More specifically, $H_1$ is just a clique on $\hat V_1$.
Given $H_i$, we derive $H_{i+1}$ by adding all the edges between $v_{i+1}$ and $V_i$, and removing all the edges, except for one, between
$V_i$ and the unique vertex in $\widehat V_{i} \setminus V_{i}$.

\begin{figure}
\begin{center}
\includegraphics{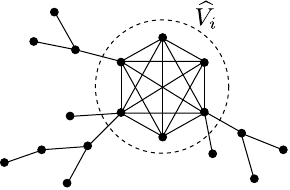}
\caption{The graph $H_i$ with $k=5$.\label{fig:Hi}}
\end{center}
\end{figure}

The main part of the algorithm involves determining which edge in $(\widehat V_{i} \setminus V_{i}) \times V_{i}$ we keep in $H_{i+1}$.
The high-level idea behind our approach is as follows.
On one hand, we want to keep short edges so that the distance between $\widehat V_{i} \setminus V_{i}$ and $V_{i}$ is small.
On the other hand, keeping always the shortest edge leads to accumulation of the stretch
for certain pairs (whose shortest-path keeps getting longer, through a sequence of ``short'' edges).
We avoid this obstacle via a randomized process that assigns a \emph{rank} to each edge, which intuitively means that edges of lower rank are more likely to be deleted.
More specifically, at each step $i$, we pick a random threshold $L$ and keep the highest ranked edge of length at most $L$, deleting the rest.
We also update the ranks of the edges in the new graph appropriately.

Formally, let $\rank_i : V(G)\times V(G)\to \mathbb Z_{\geq 0}$ be an arbitrary symmetric function, with $\rank_1(u,v)=0$, for each $u,v\in V(G)$.
Let $E(\widehat V_i)={\widehat V_i \choose 2}$, i.e.~the set of edges internal to $\widehat V_i$.
For $u,v\in V(H_i)$, let $P_i^{u,v}$ be the unique path between $u$ and $v$ in $H_i$ that contains at most one edge in $E(\widehat V_i)$.
Observe that $P_i^{u,v}$ is well-defined since $\widehat V_i$ forms a clique.
For an edge $e\in E(\widehat V_i)$ we set
\[
\edgerank_i(e) = \max_{u,v\in V(H_i) : e\in P_i^{u,v}} \rank_i(u,v)
\]

The randomized process for generating $H_{i+1}$ and $\rank_{i+1}$ from $H_i$ and $\rank_i$ is as follows.
Let $\tau = 4k$ be our new ``inflation factor.''

\begin{quote}
Let $w$ be the unique vertex in $\widehat V_i \setminus V_{i}$,
and enumerate $E(w, V_{i}) = \{e_1, e_2, \ldots, e_k\}$
so that $\len(e_1) \leq \len(e_2) \leq \cdots \leq \len(e_k)$.

Now, let $\{\sigma_j\}_{j=1}^{k-1}$ be a family of independent
$\{0,1\}$ random variables with
$$
\pr[\sigma_j = 1] = \min\left\{1,\tau \frac{\len(e_j)}{\len(e_{j+1})}\right\}\,,
$$
and define the set of eligible edges by
$$
\mathcal E = \left\{ e_j : \prod_{i=1}^{j-1} \sigma_i = 1 \right\}\,.
$$
In particular, $e_1 \in \mathcal E$ always.
Let $e^* \in \mathcal E$ be any edge satisfying $\edgerank_i(e^*) = \max_{e \in \mathcal E} \edgerank_i(e)$.

Finally, we define $H_{i+1}$ as the graph with vertex set $V(G_{i+1})$
and edge set (see Figure \ref{fig:transition}),
$$E(H_{i+1})=\{e^*\} \cup \{ \{v_{i+1}, u \} : u \in V_{i} \} \cup \left(E(H_i) \setminus E(w, V_{i})\right).$$

We also define $\rank_{i+1}$ as follows.
For any $u,v\in V(G)$
$$
\rank_{i+1}(u,v) = \begin{cases}
\rank_i(u,v) & \textrm{if } E(P_i^{u,v})\cap {\cal E}=\emptyset\\
\rank_i(u,v)+1 & \textrm{otherwise }.
\end{cases}
$$
\end{quote}

Intuitively, $\rank_i(u,v)$ counts how many times the path between $u$ and $v$ was under risk to be {\em significantly} stretched until step $i$.
If $P_i^{u,v}$ does not use an edge of $\mathcal E$ then the edge $\{u,v\}$ will be stretched,
but the alternative path will, on average, be ``short enough'' that
we need not increase its rank (this is how the set $\mathcal E$ is defined).
It remains to analyze the expected stretch incurred by the above process.
First, we observe that the maximum rank of an edge is $O(k^2)$.

\begin{figure}
\begin{center}
\includegraphics{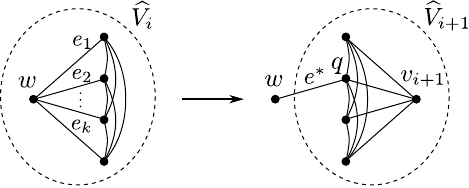}
\caption{Transitioning from $H_i$ to $H_{i+1}$. Here, $w$ is the unique vertex in $\widehat{V}_i \setminus V_{i}$. \label{fig:transition}}
\end{center}
\end{figure}

\begin{lemma}\label{lem:edgerank}
For every $i = 1, 2, \ldots, t$ and every edge $e \in E(\widehat V_{i})$, $\edgerank_i(e) \leq {k+1 \choose 2}$.
\end{lemma}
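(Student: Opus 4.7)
My plan is to prove the stronger invariant $\rank_i(u,v) \leq \binom{k+1}{2}$ for every pair $(u,v) \in V(H_i)^2$, by induction on $i$. Since $\edgerank_i(e) = \max_{(u,v): e \in P_i^{u,v}} \rank_i(u,v)$, the lemma is an immediate consequence.

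The central tool is a notion of \emph{projection}. For $x \in V(H_j)$, let $\phi_j(x) \in \widehat V_j$ denote the unique vertex of the clique $\widehat V_j$ such that $P_j^{x, \phi_j(x)}$ uses no edge of $E(\widehat V_j)$. Since by construction $\widehat V_j$ is the only non-trivial 2-connected component of $H_j$, the vertex $\phi_j(x)$ is simply the root of the tree in $H_j \setminus E(\widehat V_j)$ containing $x$. Define $\pi_j(u,v) := \{\phi_j(u), \phi_j(v)\}$; then $P_j^{u,v}$ uses exactly the clique edge $\pi_j(u,v)$ when this is a 2-element set, and no clique edge otherwise. Four structural facts then follow easily: (i) $\rank_{j+1}(u,v) > \rank_j(u,v)$ iff $|\pi_j(u,v)|=2$, $w_j \in \pi_j(u,v)$, and $\pi_j(u,v) \in \mathcal E_j$; (ii) $\phi_{j+1}(x)$ equals $v^*_j$ if $\phi_j(x) = w_j$, and $\phi_j(x)$ otherwise; (iii) once a vertex is dropped from the clique, it never rejoins any later clique (since $V_{i+1} \subseteq V_i \cup \{v_{i+1}\}$); (iv) once $\pi_j(u,v)$ collapses to a singleton, it is absorbed there for all $j' \geq j$, freezing the rank.

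The base case $i = 1$ is trivial since $\rank_1 \equiv 0$. For the inductive step, I fix $(u,v)$ and consider its rank-increase steps $j_1 < \cdots < j_r$. At each such step, $\pi_{j_m}(u,v)$ is a 2-subset containing $w_{j_m}$, and by (iii) these $r$ signatures are pairwise distinct. The key combinatorial assertion is that all these 2-subsets lie within a canonical ``ancestor set'' of at most $k+1$ vertices associated with $(u,v)$, giving $r \leq \binom{k+1}{2}$. The algorithm's rule of choosing $e^*$ to maximize edgerank over $\mathcal E_j$ supplies the essential self-limiting mechanism: whenever the pair's own edge $\pi_{j_m}(u,v)$ realizes the maximum edgerank in $\mathcal E_{j_m}$ (which must occur once $\rank_{j_m}(u,v)$ grows high enough by the inductive hypothesis on other pairs), we have $v^*_{j_m}$ equal to the other endpoint of $\pi_{j_m}(u,v)$, so by (ii) and (iv) the pair collapses to a singleton and its rank freezes at that value.

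The main obstacle is pinning down the canonical $(k+1)$-vertex ancestor set cleanly so the counting bound goes through. I expect to do this by tracking the ancestor chains $\phi_1(u), \phi_2(u), \ldots$ and $\phi_1(v), \phi_2(v), \ldots$ --- each of which consists of distinct vertices by (iii) --- and assigning a potential to each pair that initializes at $\binom{k+1}{2}$ and decreases by at least one at each rank-increase step, reflecting that each such step consumes a 2-subset of a $(k+1)$-element ancestor pool. The algorithm's $e^*$ rule is what guarantees the pool can never grow beyond $k+1$ before forcing a collapse via property (iv).
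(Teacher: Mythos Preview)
The central gap is the undefined ``canonical ancestor set of at most $k+1$ vertices''; no such set exists in general. Already for $k=2$ one can arrange three rank-increase steps for a fixed pair $(u,v)$ with $\pi_{j_1}(u,v)=\{a,b\}$, $\pi_{j_2}(u,v)=\{b,c\}$, $\pi_{j_3}(u,v)=\{c,d\}$ and $a,b,c,d$ all distinct: at step $j_1$ take $w_{j_1}=a$ and $e^*=\{a,c\}$ so $\phi(u)$ moves from $a$ to $c$; at step $j_2$ take $w_{j_2}=b$ and $e^*=\{b,d\}$ so $\phi(v)$ moves from $b$ to $d$; and so on. These three distinct $2$-subsets do not fit inside any $3$-element set. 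More generally, your fact (iii) only says each ancestor chain visits distinct vertices---it gives no bound on the chain's length---so the union of the $\pi_{j_m}(u,v)$ can be arbitrarily large. The ``potential initializing at $\binom{k+1}{2}$ and decreasing at each rank-increase'' is precisely the missing object, and nothing in the per-pair setup produces it; the $e^*$ rule tells you the pair collapses once its edge has the strictly largest edgerank in $\mathcal E$, but that threshold depends on all other pairs simultaneously, which is circular in a per-pair induction.

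The paper's argument is structurally different and global. It maintains the invariant that if the $\binom{k+1}{2}$ clique-edge edgeranks are sorted as $R_{i,1}\le\cdots\le R_{i,\binom{k+1}{2}}$, then $R_{i,j}\le j$ for every $j$. The transition is analyzed in phases: first zero out the edgerank of $e^*$ (the maximum over $\mathcal E$) and increment the rest of $\mathcal E$; then merge each departing edge's rank into the surviving clique edge it maps to; finally replace the $k$ edges at $w$ by $k$ fresh rank-$0$ edges at $v_{i+1}$. Each phase preserves $R_{i,j}\le j$. So $\binom{k+1}{2}$ enters as the number of clique edges, with the sorted invariant tying rank to position---it is a statement about the whole ensemble of edgeranks, not about any single pair's trajectory.

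A smaller but related issue: your ``stronger invariant'' $\rank_i(u,v)\le\binom{k+1}{2}$ for \emph{all} pairs is too strong to be the inductive target. At a step where $\pi_i(u,v)\in\mathcal E$ and $e^*=\pi_i(u,v)$, the pair collapses to a singleton, but the update rule still increments the rank (it checks only $E(P_i^{u,v})\cap\mathcal E\neq\emptyset$). Thus a collapsed pair can carry rank $\binom{k+1}{2}+1$; this does not feed into any $\edgerank$, but it does falsify the invariant as stated.
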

\begin{proof}
For each $i=1,2,\ldots,t$, and each $j=1,2,\ldots,{k+1 \choose 2}$, let $R_{i,j}$ be the $j$-th largest edge-rank of the edges in $E(\widehat{V}_{i})$.
That is, for each $i=1,2,\ldots,t$,
$R_{i,1}\leq R_{i,2}\leq \ldots \leq R_{i,{k+1 \choose 2}}$.

We will prove by induction on $i$ that for each $i=1,2,\ldots, t$,
for each $1 \leq j \leq {k+1 \choose 2}$, we have $R_{i,j} \leq j$.
For $i=1$, all the ranks are equal to $0$, and the assertion holds trivially.

Assume now that the assertion holds for $i-1$.
It is convenient to analyze the transition from step $i-1$ to step $i$ in three phases.
We need to remove the edges in $E(w, V_i) \setminus \{e^*\}$  and add the edges in $E(v_{i+1}, V_i)$, while updating the ranks accordingly.
For notational simplicity, we assume that the rank of an edge that is removed is set to zero.
Let $e^*$ be the maximum-rank edge in ${\cal E}$.
In the first phase, we set the rank of $e^*$ to zero, and we increase the rank of all remaining edges in ${\cal E}$ by one.
Clearly, the resulting edge ranks satisfy the inductive invariant.

In the second phase, for any edge $e\in E(w, V_i) \setminus \{e^*\}$, we update the rank of an edge $e'=e'(e)\in E(\widehat V_{i})\cap E(\widehat V_{i+1})$ to be $\edgerank(e')=\max\{\edgerank(e'),\edgerank(e)\}$, and we set the rank of $e$ to zero.
The point here is that for any $e \in E(w,V_i)$, there is a unique edge $e' \in E(\widehat V_{i})\cap E(\widehat V_{i+1})$
such that, for any $u,v \in V(H_i)$, if $e \in P_i^{u,v}$ then $e' \in P_{i+1}^{u,v}$.
In other words, the paths that use $e$ will have to be rerouted through a new path
that uses $e'$.  This explains how the edge-rank of $e$ is ``transferred'' to $e'$.

Clearly, after the second phase the ranks still satisfy the inductive invariant.
Finally, in the third phase we remove the edges in $E(w,V_i)$, and we add the edges in $E(v_{i+1}, V_i)$.
All the removed edges have at this point rank zero, and all new edges also have rank zero.
Thus, the inductive invariant is satisfied.
\end{proof}

\medskip

For any $i\in \{1,\ldots,t\}$, $r\in \{0,\ldots,{k+1 \choose 2}\}$,
and any edge $\{u,v\} \in E(G_i)$, we put
\begin{equation}\label{eq:kir}
K_i^{u,v}(r) = \max \left\{ \mathbb{E}\left[\frac{d_{H_t}(u,v)}{d_{H_i}(u,v)} \, \Big|\, H_i=\Gamma, \rank_i(u,v)=\rho \right] : (\Gamma, \rho) \in \Omega_i(u,v;r) \right\},
\end{equation}
where we define
\[
\Omega_i(u,v;r) = \left\{ (\Gamma, \rho) : \pr(H_i=\Gamma, \rank_i(u,v)=\rho) > 0 \textrm{ and } \rho \geq r \right\}\,.
\]
In other words, $K_i^{u,v}(r)$ is the maximum expected stretch for all stages after $i$,
conditioned on the worst possible configuration over subgraphs $H_i$ and rank functions satisfying $\rank_i(u,v) \geq r$.
We further define $K_i^{u,v}\left({k+1 \choose 2}+1\right)=1$.

\medskip

For the next three lemmas and the corollary that follows,
we fix an edge $\{u,v\} \in E(G_i)$, and a number $r \in \{0,\ldots,{k+1 \choose 2}\}$.
Let $(\Gamma,\rho) \in \Omega_i(u,v;r)$ be a maximizer in \eqref{eq:kir}, and write
$\pr^*[\cdot] = \pr[\cdot\mid H_i=\Gamma, \rank_i=\rho]$
and
$\E^*[\cdot] = \E[\cdot\mid H_i=\Gamma, \rank_i=\rho]$.
A major point is that
the following calculations are oblivious to the conditioning, aside from
the assumption that $\rank_i(u,v) \geq r$.



\begin{lemma}\label{lem:pwd_master}
Suppose that $e_j \in E(P_i^{u,v})$ for some $j \in \{1,2,\ldots,k\}$.  Then,
\begin{eqnarray*}
K_i^{u,v}(r)
&\leq &
\pr^*\left[e_j \in \mathcal E\right] \ \left(1+2\frac{\mathbb E^*[\len(e^*)\,|\,e_j \in \mathcal E]}{\len(e_j)}\right) K^{u,v}_{i+1}(r+1)\\
 & & +\, \pr^*\left[e_j \notin \mathcal E\right] \left(1+2 \frac{\mathbb E^*[\len(e^*)\,|\,e_j \notin \mathcal E]}{\len(e_j)}\right) K^{u,v}_{i+1}(r) 
\end{eqnarray*}
\end{lemma}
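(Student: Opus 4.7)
The plan is to decompose the quantity $K_i^{u,v}(r)$ by conditioning on the event $\{e_j \in \mathcal E\}$, apply the Markov structure of the embedding process to reduce to $K_{i+1}^{u,v}$, and then establish a deterministic one-step stretch bound for $d_{H_{i+1}}(u,v)/d_{H_i}(u,v)$. Let $(\Gamma,\rho) \in \Omega_i(u,v;r)$ be the maximizer, so that $\E^*$ and $\pr^*$ condition on $\{H_i = \Gamma, \rank_i = \rho\}$. Since the random choices driving $H_{i+1},\ldots,H_t$ are independent of $(H_i,\rank_i)$, the tower property gives
\[
\E^*\!\left[\frac{d_{H_t}(u,v)}{d_{H_i}(u,v)}\right] \le \E^*\!\left[\frac{d_{H_{i+1}}(u,v)}{d_{H_i}(u,v)} \cdot K_{i+1}^{u,v}(\rank_{i+1}(u,v))\right].
\]

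The rank-tracking step uses the following structural fact. Because $\mathcal E \subseteq E(w,V_i) \subseteq E(\widehat V_i)$ and $P_i^{u,v}$ contains at most one edge of $E(\widehat V_i)$, the hypothesis $e_j \in E(P_i^{u,v})$ forces $E(P_i^{u,v}) \cap \mathcal E \subseteq \{e_j\}$. The update rule for $\rank_{i+1}$ then yields $\rank_{i+1}(u,v) \ge \rho(u,v)+1 \ge r+1$ on the event $\{e_j \in \mathcal E\}$ and $\rank_{i+1}(u,v) \ge r$ on its complement. Since $K_{i+1}^{u,v}$ is non-increasing in its argument, the inside factor may be replaced by $K_{i+1}^{u,v}(r+1)$ on the first event and by $K_{i+1}^{u,v}(r)$ on the second, and then pulled outside the corresponding conditional expectation.

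For the one-step distortion, write $e_j = \{w, x_j\}$ and $e^* = \{w, x^*\}$. The path $P_i^{u,v}$ decomposes as $u\to\cdots\to w\to x_j\to\cdots\to v$, and its two side-segments use no edge of $E(\widehat V_i)$, hence survive into $H_{i+1}$. When $e_j \ne e^*$, the middle edge is deleted but can be replaced by $w\to x^*\to x_j$: the kept edge $e^*$ is in $H_{i+1}$, and the clique edge $\{x^*,x_j\}$ persists because $V_i \subseteq \widehat V_{i+1}$ remains a clique in $H_{i+1}$. Since $w,x^*,x_j \in \widehat V_i$ and the length function is reduced, $\len(x^*,x_j) \le \len(e^*) + \len(e_j)$, giving $d_{H_{i+1}}(u,v) \le d_{H_i}(u,v) + 2\len(e^*)$ (trivially when $e_j = e^*$). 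Because $P_i^{u,v}$ is a shortest $u$-$v$ path in $H_i$ (by its uniqueness and the triangle inequality across the clique $\widehat V_i$), we have $d_{H_i}(u,v) \ge \len(e_j)$, so
\[
\frac{d_{H_{i+1}}(u,v)}{d_{H_i}(u,v)} \le 1 + \frac{2\len(e^*)}{\len(e_j)}.
\]
Substituting this bound inside each conditional expectation and combining with the rank-tracking step produces exactly the claimed inequality.

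The main obstacle is the bookkeeping in the tower-property step: one must check that the future process given $(H_{i+1},\rank_{i+1})$ is conditionally independent of $(H_i,\rank_i)$, so that the maximum defining $K_{i+1}^{u,v}$ legitimately absorbs the inner conditional expectation. This follows from the step-by-step independence of the random thresholds $\sigma_j$ used to construct $\mathcal E$ at each stage.
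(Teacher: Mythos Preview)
Your argument follows the same structure as the paper's: bound the one-step ratio $d_{H_{i+1}}(u,v)/d_{H_i}(u,v)$ by $1+2\len(e^*)/\len(e_j)$, split on the event $\{e_j\in\mathcal E\}$, and track how $\rank$ moves. Your derivation of the one-step bound is in fact more careful than the paper's (which simply asserts it), and your observation that $E(P_i^{u,v})\cap\mathcal E\subseteq\{e_j\}$ is the right way to justify the rank update.

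There is, however, a gap at the boundary value $r=\binom{k+1}{2}$. Your tower-property step
\[
\E^*\!\left[\frac{d_{H_t}(u,v)}{d_{H_i}(u,v)}\right]\ \le\ \E^*\!\left[\frac{d_{H_{i+1}}(u,v)}{d_{H_i}(u,v)}\cdot K_{i+1}^{u,v}\bigl(\rank_{i+1}(u,v)\bigr)\right]
\]
requires that the inner conditional expectation $\E[d_{H_t}/d_{H_{i+1}}\mid H_{i+1},\rank_{i+1}]$ be bounded by $K_{i+1}^{u,v}(\rank_{i+1}(u,v))$ for every reachable configuration. When $\rank_i(u,v)=\binom{k+1}{2}$ and $e_j\in\mathcal E$, the update rule gives $\rank_{i+1}(u,v)=\binom{k+1}{2}+1$, and $K_{i+1}^{u,v}\bigl(\binom{k+1}{2}+1\bigr)=1$ is a \emph{convention}, not a consequence of the defining maximum. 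Your monotonicity appeal therefore needs the additional fact that the future stretch from step $i{+}1$ onward is at most $1$ in this situation. The paper supplies exactly this: by Lemma~\ref{lem:edgerank} one has $\edgerank_i(e_j)\ge\rank_i(u,v)=\binom{k+1}{2}$, which is already the maximum possible edge-rank, so $e_j$ attains the maximum among eligible edges and hence $e_j=e^*$; then $P_i^{u,v}$ survives intact to $H_t$ and $d_{H_t}(u,v)=d_{H_i}(u,v)$. Without this step, the inequality at $r=\binom{k+1}{2}$ is unjustified. Adding this one sentence (invoking Lemma~\ref{lem:edgerank}) closes the gap and makes your proof complete.
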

\begin{proof}
We have,
$\frac{d_{H_{i+1}}(u,v)}{d_{H_{i}}(u,v)} \leq \frac{2 \len(e^*) + \len(e_j)}{\len(e_j)}.$
There are two possibilities:  (1) $e_j \in \mathcal E$
occurs, and the rank of $\{u,v\}$ is increased by 1,
(2) $e_j \notin \mathcal E$ and the rank of $\{u,v\}$ remains the same.
This verifies the claimed inequality for $r < {k+1 \choose 2}$.

Note that, by Lemma \ref{lem:edgerank}, $\rank_i(u,v) \leq {k+1 \choose 2}.$
Thus
the lemma holds true even for $r={k+1 \choose 2}$, in which case
$e_j \in \mathcal E \implies e_j=e^*$ (since the rank of the pair $u,v$ cannot increase anymore).
If this happens, then
$d_{H_t}(u,v)=d_{H_i}(u,v)$, again verifying the claimed inequality,
since $K_{i+1}^{u,v}\left({k+1 \choose 2}+1\right)=1$ by definition.
\end{proof}

\begin{lemma}\label{lemma:inE}
For any $j\in [k]$,
$\pr^*\left[e_j \in \mathcal E\right] \left(1+2\frac{\mathbb E^*[\len(e^*)\,|\,e_j \in \mathcal E]}{\len(e_j)}\right) \leq 3 (4k)^{k-1} \frac{\len(e_1)}{\len(e_j)}$.
\end{lemma}
\begin{proof}
We have
\begin{eqnarray*}
\pr^*\left[e_j \in \mathcal E\right] \mathbb E^*[\len(e^*)\,|\,e_j \in \mathcal E] &\leq &
\mathbb E^*[\len(e^*)] \\
&=&
\sum_{h=1}^k \len(e_h) \pr^*[e^*=e_h] \\
&\leq &
\sum_{h=1}^k \len(e_h) \pr^*[e_h\in {\cal E}] \\
&\leq &
\sum_{h=1}^k \len(e_h) \frac{\len(e_1)}{\len(e_h)} \tau^{h-1} \\
&\leq &
2\, \len(e_1) \tau^{k-1}\,.
\end{eqnarray*}
Also, we have $\pr^*\left[e_j \in \mathcal E\right] \leq \tau^{j-1} \frac{\len(e_1)}{\len(e_j)} \leq \tau^{k-1} \frac{\len(e_1)}{\len(e_j)}$.
Combining these estimates yields the claim, recalling that $\tau=4k$.
\end{proof}

\begin{lemma}\label{lemma:notinE}
For any $j\in [k]$,
$\pr^*\left[e_j \notin \mathcal E\right] \left(1+2 \frac{\mathbb E^*[\len(e^*)\,|\,e_j \notin \mathcal E]}{\len(e_j)}\right) \leq 1-\frac{\len(e_1)}{\len(e_j)}.$
\end{lemma}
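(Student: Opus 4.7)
The plan is to track the first failure of the Bernoulli trials, identify $e^{*}$ with a deterministic index once the ranks are frozen, and then telescope. Write $\ell_i = \len(e_i)$ and $p_i = \min\{1,\tau \ell_i/\ell_{i+1}\}$. Let $J$ be the smallest index with $\sigma_J = 0$ (and $J = k+1$ if no such index exists), so $\mathcal E = \{e_1,\ldots,e_{\min(J,k)}\}$ and the event $\{e_j \notin \mathcal E\}$ is exactly $\{J < j\}$. Since $\pr^*$ conditions on the rank function $\rho$, the edge $e^* \in \mathcal E$ of maximum rank is a deterministic function $e^* = e_{h(J)}$ where $h(m) = \arg\max_{h \leq m}\edgerank_i(e_h)$; in particular $h(m) \leq m$. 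Expanding the left-hand side,
\[
\pr^*[e_j \notin \mathcal E]\left(1 + 2\tfrac{\E^*[\len(e^*)\mid e_j \notin \mathcal E]}{\ell_j}\right) = \bigl(1 - \pr^*[J \geq j]\bigr) + \tfrac{2}{\ell_j}\sum_{m=1}^{j-1}\pr^*[J = m]\,\ell_{h(m)},
\]
and $\pr^*[J \geq j] = \prod_{i=1}^{j-1} p_i$, so after rearranging, the desired inequality is equivalent to
\[
\ell_j \prod_{i=1}^{j-1} p_i \;\geq\; \ell_1 + 2\sum_{m=1}^{j-1}\Bigl(\prod_{i=1}^{m-1} p_i\Bigr)(1 - p_m)\,\ell_{h(m)}.
\]
Using $\ell_{h(m)} \leq \ell_m$, it suffices to prove the same inequality with $\ell_{h(m)}$ replaced by $\ell_m$.

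For the remaining inequality I will telescope. Set $S_m := \ell_m\prod_{i=1}^{m-1}p_i$, so $S_1 = \ell_1$ and $S_j$ is the desired left-hand side. Then
\[
S_{m+1} - S_m = \Bigl(\prod_{i=1}^{m-1}p_i\Bigr)\bigl(\ell_{m+1}p_m - \ell_m\bigr),
\]
which I analyze in the two cases of $p_m = \min\{1,\tau\ell_m/\ell_{m+1}\}$. If $p_m = 1$, the difference is $(\prod_{i<m}p_i)(\ell_{m+1}-\ell_m) \geq 0$, and the corresponding term on the right-hand side vanishes because $1 - p_m = 0$. If $p_m = \tau\ell_m/\ell_{m+1} < 1$, then $\ell_{m+1}p_m = \tau\ell_m$, so the difference equals $(\tau - 1)(\prod_{i<m}p_i)\ell_m \geq (\tau-1)(1-p_m)(\prod_{i<m}p_i)\ell_m$. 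Summing over $m = 1,\ldots,j-1$ gives
\[
S_j - S_1 \;\geq\; (\tau - 1)\sum_{m=1}^{j-1}\Bigl(\prod_{i=1}^{m-1}p_i\Bigr)(1-p_m)\ell_m,
\]
and since $\tau = 4k \geq 3$ we have $\tau - 1 \geq 2$, which closes the argument.

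I do not expect any computational step to be a serious obstacle; the only delicate point is the reduction in the first paragraph, namely the observation that conditioning on the rank function $\rho$ makes $e^*$ a \emph{deterministic} function of which Bernoulli trials fail. This is what allows the conditional expectation of $\len(e^*)$ on the event $\{e_j \notin \mathcal E\}$ to collapse into the clean sum indexed by $J=m$, after which the inequality reduces to an elementary telescoping identity driven by the monotonicity $\ell_1 \leq \cdots \leq \ell_k$ and the choice of the inflation factor $\tau$.
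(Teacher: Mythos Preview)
Your proof is correct. Both your argument and the paper's reduce to decomposing the event $\{e_j \notin \mathcal E\}$ according to the last index in $\mathcal E$ and using $\len(e^*) \leq \len(e_m)$ on the event $J=m$, but the subsequent calculations differ. The paper singles out the set $I = \{h < j : \ell_{h+1} > \tau \ell_h\}$ of indices where $p_h < 1$, upper-bounds $\pr^*[e_h \in \mathcal E]$ by $\tau^{k_h-1}\ell_1/\ell_h$ (using monotonicity of the $\ell_i$), and then telescopes in the index $k_h$; the final step uses $\tau = 4k$ via the inequality $2k\tau^{|I|-1} - \tau^{|I|} \leq -1$, which effectively requires $\tau \geq 2k+1$. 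Your argument instead telescopes the exact quantity $S_m = \ell_m \prod_{i<m} p_i$ and handles the two cases $p_m = 1$ and $p_m < 1$ directly, which is cleaner and only needs $\tau \geq 3$. The gain is modest (both give the same lemma), but your route avoids the auxiliary set $I$ and the associated bounding step, and shows that the inflation factor need not grow with $k$ for this particular lemma.
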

\begin{proof}
Let $$I = \left\{h\in \{1,2,\ldots,j-1\} : \len(e_{h+1}) > \tau \cdot \len(e_h)\right\}.$$
Observe that if $h\in \{1,2,\ldots,j-1\}\setminus I$, then whenever $e_h \in \mathcal E$, we have also $e_{h+1}\in \mathcal E$.
For each $h\in I$, let $k_h=|I\cap \{1,2,\ldots,h\}|$.
\begin{eqnarray*}
\pr^*\left[e_j \notin \mathcal E\right] \left(1+2 \frac{\mathbb E^*[\len(e^*)\,|\,e_j \notin \mathcal E]}{\len(e_j)}\right)
&\leq& \sum_{h=1}^{j-1} \pr^*\left[e_h \in \mathcal E \textrm{ and } e_{h+1} \notin \mathcal E\right] \left(1+\frac{2\len(e_h)}{\len(e_j)}\right) \\
&=& \sum_{h\in I} \pr^*\left[e_h \in \mathcal E \textrm{ and } e_{h+1} \notin \mathcal E\right] \left(1+\frac{2\len(e_h)}{\len(e_j)}\right) \\
&\leq &
\sum_{h\in I} \tau^{k_h-1} \frac{\len(e_1)}{\len(e_{h})} \left(1-\tau \frac{\len(e_{h})}{\len(e_{h+1})}\right) \left(1+\frac{2\len(e_h)}{\len(e_j)}\right) \\
&\leq & \sum_{h\in I} \tau^{k_h-1} \frac{\len(e_1)}{\len(e_h)} \left(1-\tau \frac{\len(e_{h})}{\len(e_{h+1})} + \frac{2\len(e_h)}{\len(e_j)}\right) \\
&=& \sum_{h\in I} \tau^{k_h-1} \frac{\len(e_1)}{\len(e_h)} \left(1-\tau \frac{\len(e_{h})}{\len(e_{h+1})}\right)
+  \frac{\len(e_1)}{\len(e_j)} \sum_{h\in I} 2 \tau^{k_h-1}  \\
&\leq & 1- \tau^{|I|} \frac{\len(e_1)}{\len(e_{j})} + \frac{\len(e_1)}{\len(e_j)} (2k\tau^{|I|-1})  \\
& = & 1+\frac{\len(e_1)}{\len(e_j)} \left(2k\tau^{|I|-1}-\tau^{|I|}\right)\\
& \leq & 1-\frac{\len(e_1)}{\len(e_j)}.
\end{eqnarray*}
\end{proof}

\begin{corollary}\label{cor:major}
For every $\{u,v\} \in E(G_i)$ and $r \in \{0,\ldots,{k+1 \choose 2}\}$, we have
$$
K_i^{u,v}(r)
\leq
\max\left\{ 3(4k)^{k-1} K^{u,v}_{i+1}(r+1), K^{u,v}_{i+1}(r) \right\}.
$$
\end{corollary}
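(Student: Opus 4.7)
The plan is to combine Lemmas \ref{lem:pwd_master}, \ref{lemma:inE}, and \ref{lemma:notinE} via a simple convex combination argument, with a short case split to handle pairs whose shortest path in $H_i$ is untouched by the transition $H_i \to H_{i+1}$. Fix $\{u,v\} \in E(G_i)$ and $r\in\{0,\ldots,\binom{k+1}{2}\}$, and let $(\Gamma,\rho)\in\Omega_i(u,v;r)$ be a maximizer for \eqref{eq:kir}. Since $\widehat V_i$ is a clique and the length function is reduced, $P_i^{u,v}$ is in fact a shortest $u$-$v$ path in $H_i$; it uses at most one edge of $E(\widehat V_i)$, and hence at most one edge among $e_1,\ldots,e_k \in E(w, V_i)$. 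I split on whether it uses such an edge.

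\textbf{Case 1:} $P_i^{u,v}$ contains $e_j$ for some $j \in \{1,\ldots,k\}$. Applying Lemma \ref{lem:pwd_master} for this $j$ and bounding the two resulting coefficients by Lemmas \ref{lemma:inE} and \ref{lemma:notinE} respectively, I obtain
\[
K_i^{u,v}(r) \;\leq\; \alpha \cdot (4k)^{k}\, K_{i+1}^{u,v}(r+1) \;+\; (1-\alpha)\, K_{i+1}^{u,v}(r),
\]
where $\alpha := \len(e_1)/\len(e_j) \in [0,1]$ (the upper bound uses that $e_1,\ldots,e_k$ are listed in increasing order of length). Since $\alpha \in [0,1]$, the right-hand side is a convex combination of $(4k)^{k} K_{i+1}^{u,v}(r+1)$ and $K_{i+1}^{u,v}(r)$, and is therefore at most their maximum, which is exactly the claimed inequality.

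\textbf{Case 2:} $P_i^{u,v}$ uses no edge in $E(w,V_i)$. Only the edges in $E(w,V_i)\setminus\{e^*\}$ are deleted in passing from $H_i$ to $H_{i+1}$, so every edge of $P_i^{u,v}$ survives, yielding $d_{H_{i+1}}(u,v)\leq\len(P_i^{u,v}) = d_{H_i}(u,v)$. Moreover $\rank_{i+1}(u,v) \geq \rank_i(u,v) \geq r$. Writing $d_{H_t}/d_{H_i} \leq d_{H_t}/d_{H_{i+1}}$ and conditioning further on $(H_{i+1},\rank_{i+1})$, whose support lies in $\Omega_{i+1}(u,v;r)$, gives $K_i^{u,v}(r) \leq K_{i+1}^{u,v}(r)$, which is trivially at most the stated maximum.

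The substantive step is Case 1, which is essentially a one-line consequence of the three preceding lemmas once one recognizes that the sorting convention $\len(e_1) \leq \len(e_j)$ makes the two bounds fit together as a convex combination of the form $\alpha A + (1-\alpha) B \leq \max(A,B)$. Case 2 amounts to routine bookkeeping via the tower property, with the only mildly subtle point being the verification that $P_i^{u,v}$ is a genuine shortest path in $H_i$ (which follows immediately from the clique/tree structure of $H_i$ together with the reduced-length assumption); I do not expect a genuine obstacle there.
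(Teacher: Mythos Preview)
Your proof is correct and follows essentially the same approach as the paper: split according to whether $P_i^{u,v}$ passes through one of the edges $e_1,\ldots,e_k$; in the main case combine Lemmas~\ref{lem:pwd_master}, \ref{lemma:inE}, \ref{lemma:notinE} and observe that the result is a convex combination; in the remaining case the path is untouched and the bound is immediate. The only cosmetic difference is that in Case~2 the paper asserts the stronger $K_i^{u,v}(r)=1$ (arguing the path persists all the way to $H_t$), whereas you prove the one-step bound $K_i^{u,v}(r)\le K_{i+1}^{u,v}(r)$ via the tower property; either suffices.
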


\begin{proof}
Suppose that $H_i=\Gamma$ and $\rank_i=\rho$.  If $E(P_i^{u,v}) \cap E(\hat V_i)$ is empty,
then $K_i^{u,v}(r)=1$ because the current $u$-$v$ path in $H_i$ will be preserved in $H_t$.
Otherwise, we have $E(P_i^{u,v}) \cap E(\hat V_i) = \{e_j\}$ for some $j \in [k]$.
Apply Lemmas \ref{lem:pwd_master}, \ref{lemma:inE}, and \ref{lemma:notinE}
to conclude that
\begin{eqnarray*}
K^{u,v}_i(r)
&\leq &
 \frac{\len(e_1)}{\len(e_j)} 3(4k)^{k-1} K^{u,v}_{i+1}(r+1) +
\left(1-\frac{\len(e_1)}{\len(e_j)}\right) K^{u,v}_{i+1}(r) \\
&\leq &
\max\left\{ 3(4k)^{k-1} K^{u,v}_{i+1}(r+1), K^{u,v}_{i+1}(r) \right\},
\end{eqnarray*}
completing the proof.
\end{proof}

We can now state and prove our main theorem.

\begin{theorem}\label{thm:pathwidth-into-trees}
For every $k \geq 1$,
every metric graph of pathwidth $k$ admits a stochastic $D$-embedding into a distribution
over trees with $D \leq (4k)^{k^3}$.
\end{theorem}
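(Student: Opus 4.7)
The plan is to combine Corollary~\ref{cor:major} with the rank bound in Lemma~\ref{lem:edgerank} to control the expected stretch of $H_t$ relative to $G$, and then to replace the residual clique in $H_t$ by a random tree to obtain a genuine tree. Concretely, I would prove by downward induction on $i \in \{t, t-1, \ldots, 1\}$ that, for every edge $\{u,v\} \in E(G_i)$ and every $r \in \{0,1,\ldots,R\}$ where $R := \binom{k+1}{2}+1$, one has $K_i^{u,v}(r) \leq (4k)^{k(R-r)}$. The base case $i=t$ is trivial since $K_t^{u,v}(r)=1$. For the inductive step, Corollary~\ref{cor:major} combined with the inductive hypothesis yields
\[
K_i^{u,v}(r) \leq \max\!\left\{(4k)^k \cdot (4k)^{k(R-r-1)},\ (4k)^{k(R-r)}\right\} = (4k)^{k(R-r)}.
\]
Specialising to $r = 0$ gives $K_i^{u,v}(0) \leq (4k)^{kR} = O(k)^{O(k^3)}$ for every edge of every $G_i$.

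To convert this into a bound on the overall embedding, I would lift the per-edge bound to all pairs. For every edge $\{u,v\} \in E(G)$, let $i^\ast$ be the smallest index with $\{u,v\} \in E(G_{i^\ast})$; then both endpoints lie in $\widehat V_{i^\ast}$, so $\{u,v\}$ appears in the initial clique of $H_{i^\ast}$ with length $\len(u,v)$, and the tower property of conditional expectation yields $\E[d_{H_t}(u,v)] \leq K_{i^\ast}^{u,v}(0) \cdot \len(u,v) \leq O(k)^{O(k^3)} \cdot \len(u,v)$. Summing along a shortest path in $G$ and using linearity extends this to arbitrary pairs, and since $H_t$ is a subgraph of $G$ carrying the same lengths, the identity map $V(G)\to V(H_t)$ is non-contracting.

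Finally, $H_t$ itself is not quite a tree: its only non-trivial $2$-connected component is the $(k+1)$-clique on $\widehat V_t$, to which the rest of $H_t$ is attached as a pendant forest. To obtain a genuine tree I would stochastically embed this clique, equipped with its reduced metric inherited from $G$, into a random tree of expected distortion $O(\log k)$ using the classical construction of Bartal and Fakcharoenphol--Rao--Talwar, and splice the result into $H_t$ in place of the clique. The outcome is a random tree whose total expected distortion relative to $G$ is at most $O(k)^{O(k^3)} \cdot O(\log k) = O(k)^{O(k^3)}$. All of the conceptual work has already been absorbed into Corollary~\ref{cor:major}, which packages the delicate trade-off among eligibility probability, edge blow-up, and rank increase; the only remaining obstacle is the bookkeeping step that verifies the double-induction scheme above correctly telescopes through the $\binom{k+1}{2}+1$ possible rank transitions.
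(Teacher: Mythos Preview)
Your argument is correct and follows essentially the same route as the paper: the downward induction on $i$ using Corollary~\ref{cor:major} is exactly the paper's ``applying Corollary~\ref{cor:major} inductively,'' and your per-edge-then-shortest-path extension matches the paper's reasoning. The only substantive difference is in the final step: the paper replaces the residual $(k+1)$-clique on $\widehat V_t$ by a \emph{minimum spanning tree}, incurring a deterministic factor of at most $k+1$, whereas you invoke Bartal/FRT on the $(k+1)$-point clique metric for an expected $O(\log k)$ factor. Both are absorbed into $O(k)^{O(k^3)}$, but the MST argument is more elementary and keeps the proof self-contained; your FRT step is perfectly valid but imports machinery that is not needed here.
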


\begin{proof}
We may assume that $k \geq 2$ as the statement is trivial for $k=1$.
Let $H_t$ be the random subgraph of $G$.
Fix $\{u,v\} \in E(G)$, and suppose that $i_0$ is the smallest
number for which $u,v \in V(G_{i_0})$.
In this case, since $\{u,v\}$ is an edge, we have $d_{G_{i_0}}(u,v)=d_G(u,v)$, thus
$$
\E\left[d_{H_t}(u,v)\right] \leq K_{i_0}^{u,v}(0) \cdot \len(u,v)\,.
$$

Now applying Corollary \ref{cor:major} inductively immediately yields the bound,
$$
K_{i_0}^{u,v}(0) \leq \left(3(4k)^{k-1}\right)^{{k+1 \choose 2}+1},
$$
recalling that $K_i^{u,v}\left({k+1 \choose 2}+1\right)=1$ for all $i$, and
$K_t^{u,v}(r)=1$ for all $r$.

Finally, observe that the only non-trivial 2-connected component of $H_t$ is a $(k+1)$-clique on $\widehat V_t$.
Replacing $\widehat V_t$ by a minimum spanning tree yields a tree $T$ with $d_T(u,v) \leq (k+1)\cdot d_{H_t}(u,v)$.
This completes the proof.
\end{proof}

\section{$\mathsf{Pathwidth}(k+1) \not\embeds\mathsf{Pathwidth}(k)$}
\label{sec:nopath}

We now show that for any fixed $k\geq 1$, and for any $n\geq 1$, there exists an $n$-vertex graph of pathwidth $k+1$ for
which any stochastic $D$-embedding into graphs of pathwidth $k$ has $D \geq \Omega(n^{2^{-k}})$,
where the $\Omega(\cdot)$ notation hides a multiplicative constant depending on $k$.
In fact, our lower bound holds even for trees of pathwidth $k+1$.
We begin by giving two structural lemmas that allow us to decompose a tree of pathwidth $\ell$ into a path
and a collection of trees of pathwidth at most $\ell-1$.


\begin{lemma}\label{lemma:connect_three_pwd}
Let $G_1,G_2,G_3$ be connected graphs of pathwidth $k$ with disjoint vertex sets, and for $i\in [3]$, let $v_i\in V(G_i)$.
Let $G$ be the graph obtained by introducing a new vertex $v^*$, and connecting it to $v_1$, $v_2$, and $v_3$.
Formally, $V(G) = \{v^*\}\cup \bigcup_{i=1}^3 V(G_i)$, and $E(G)=\bigcup_{i=1}^3 E(G_i)\cup \{\{v^*,v_i\}\}$.
Then $G$ has pathwidth $k+1$.
\end{lemma}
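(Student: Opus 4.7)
My plan is to prove $\mathrm{pw}(G) = k+1$ in two halves: a direct construction for the upper bound, and a contradiction argument against any putative width-$k$ decomposition for the lower bound.

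For the upper bound $\mathrm{pw}(G)\leq k+1$, I will take a width-$k$ path decomposition $P_i$ of each $G_i$ (bag size at most $k+1$), concatenate $P_1,P_2,P_3$ into a single sequence, and then adjoin the fresh vertex $v^*$ to every bag. Bag sizes become at most $k+2$, so the width is $k+1$. The subtree condition is immediate: $v^*$ occupies every bag, and each $u\in V(G_i)$ occupies the same contiguous block of bags as in $P_i$. Edges internal to $G_i$ remain covered inside the $P_i$-block; the three new edges $\{v^*,v_i\}$ are covered by any augmented bag of $P_i$ that already contained $v_i$.

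For the lower bound $\mathrm{pw}(G)\geq k+1$, I will suppose for contradiction that $G$ has a path decomposition $B_1,\ldots,B_T$ of width at most $k$. First, for each $i$, intersecting every bag with $V(G_i)$ yields a valid path decomposition of $G_i$; since $\mathrm{pw}(G_i)=k$ (interpreted as exactly $k$), some restricted bag has size $k+1$, which together with $|B_j|\leq k+1$ forces a full bag $B_{j_i}\subseteq V(G_i)$---in particular $v^*\notin B_{j_i}$. Letting $[a,b]$ be the contiguous interval of bag indices containing $v^*$, each $j_i$ lies in $[1,a-1]\cup[b+1,T]$; by pigeonhole two of the three indices fall on the same side, say $j_1<j_2<a$. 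Second, since $G_1$ is connected, the set $[l_1,r_1]$ of bag indices whose bag meets $V(G_1)$ is an interval; it contains $j_1$, so $l_1\leq j_1<j_2$, and because $v_1$ is adjacent to $v^*$ some bag in $[a,b]$ contains $v_1$, giving $r_1\geq a>j_2$. Hence $j_2\in[l_1,r_1]$, i.e.\ $B_{j_2}\cap V(G_1)\neq\emptyset$; but $B_{j_2}\subseteq V(G_2)$ is disjoint from $V(G_1)$, a contradiction.

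The key step I will need to justify carefully is the ``a full bag lies entirely inside $V(G_i)$'' claim, which is exactly where $\mathrm{pw}(G_i)=k$ (and not just $\leq k$) enters; the rest is bookkeeping on intervals. The structural reason the pathwidth jumps is that, with three branches glued at $v^*$, pigeonhole against the two sides of $[a,b]$ must produce two deep sub-decompositions on the same side, while only one branch can be ``hidden'' on each side of $v^*$'s interval---so two branches would have sufficed to keep pathwidth $k$, but three cannot.
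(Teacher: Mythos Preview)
Your proof is correct. The upper bound is exactly the paper's construction. For the lower bound, the paper proceeds differently: it sets $G_i' = G[V(G_i)\cup\{v^*\}]$, lets $A_i$ be the (necessarily contiguous) set of bag indices meeting $V(G_i')$, chooses $i_1,i_2$ with $1\in A_{i_1}$ and $\ell\in A_{i_2}$, observes that these intervals overlap (at the bags containing $v^*$) and hence cover $[1,\ell]$, and then strips $V(G_{i_1}')\cup V(G_{i_2}')$ from every bag to obtain a width-$(k-1)$ path decomposition of the remaining $G_{i_3}$, contradicting $\mathrm{pw}(G_{i_3})=k$. Your argument instead locates a ``full'' bag $B_{j_i}\subseteq V(G_i)$ for each $i$ (the one place where $\mathrm{pw}(G_i)=k$ exactly is used), pigeonholes two of these to one side of $v^*$'s interval, and derives a direct contradiction from the connectivity interval of $V(G_1)$ forcing $B_{j_2}$ to meet $V(G_1)$. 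Both arguments are short and valid; the paper's is a clean reduction to the pathwidth of a subgraph, while yours is a more hands-on obstruction that makes explicit where the bag-size constraint is saturated.
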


\begin{proof}
It is easy to see that $G$ has pathwidth at most $k+1$:
For each $i\in [3]$ take a path decomposition of $G_i$ with bags $C_{i,1},\ldots,C_{i,{\ell}_i}$.
For each $i\in [3]$, $j\in [\ell_i]$, let $C'_{i,j}=C_{i,j}\cup \{v^*\}$.
The bags $C'_{1,1},\ldots,C'_{1,\ell_1},C'_{2,1},\ldots,C'_{2,\ell_2},C'_{3,1},\ldots,C'_{3,\ell_3}$ induce a path decomposition of $G$ with width at most $k+1$.

Assume now for the sake of contradiction that the pathwidth of $G$ is at most $k$.
That is, there exists a path decomposition of $G$ with bags $C_1,\ldots,C_{\ell}$, such that: (i) for each $i\in [\ell]$, $|C_i|\leq k+1$, (ii) for each $\{u,v\}\in E(G)$ there exists $i\in [\ell]$ with $u,v\in C_i$, and (iii) for each $v\in V(G)$ there exists a subinterval $I\subseteq [\ell]$ such that $v\in C_i$ iff $i\in I$.
For each $i\in [3]$, let $G_i'$ be the subgraph of $G$ induced by $V(G_i)\cup \{v*\}$.
Let also
\[
A_i = \{j\in [\ell] : C_j \cap V(G_i') \neq \emptyset\}.
\]
Note that since $G_i'$ is connected, it follows that $A_i$ is a subinterval of $[\ell]$.
Pick $i_1,i_2\in [3]$, such that $1\in A_{i_1}$, and $\ell\in A_{i_2}$.
Note that we might have $i_1=i_2$.
Since $V(G_{i_1}')\cap V(G_{i_2}') \neq \emptyset$, we have that $A_{i_1}\cap A_{i_2}\neq \emptyset$.
In particular, $A_{i_1}\cup A_{i_2}=[\ell]$.
Therefore, each bag $C_i$ contains at least one vertex either from $G_{i_1}'$, or $G_{i_2}'$.
Let $i_3$ be an element in $[3]\setminus \{i_1,i_2\}$.
Removing $V(G_{i_1}')\cup V(G_{i_2}')$ from all the bags $C_i$, we get a decomposition of $G\setminus (G_{i_1}' \cup G_{i_2}') = G_{i_3}$ with width at most $k-1$, a contradiction since $G_{i_3}$ has pathwidth $k$.
\end{proof}

The following lemma is straightforward.

\begin{lemma}
\label{lem:pwminor}
If $H$ is a minor of $G$, then the pathwidth of $H$ is at most the pathwidth of $G$.
\end{lemma}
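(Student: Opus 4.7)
The plan is to prove the three single-step versions of the claim, namely that pathwidth is non-increasing under each of (i) edge deletion, (ii) vertex deletion, and (iii) edge contraction, and then chain them together along the sequence of operations that turns $G$ into $H$. So it suffices to fix a path decomposition $(C_1,\ldots,C_\ell)$ of $G$ of minimum width, and produce, for each operation, a decomposition of the resulting graph that uses the same bag sequence with only local modifications and no bag growth.

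The first two cases are essentially free. For edge deletion $G \mapsto G \setminus e$, the same bag sequence $(C_1,\ldots,C_\ell)$ is still a valid path decomposition: the covering condition for vertices is unchanged, the covering condition for the remaining edges holds a fortiori, the contiguity condition is inherited, and no bag grows. For vertex deletion $G \mapsto G \setminus v$, take $C_i' = C_i \setminus \{v\}$; again bag sizes cannot increase, and each of the three defining conditions is immediate.

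The one case that requires a real argument is edge contraction, so that is what I would treat carefully. Suppose we contract $e = \{u,v\} \in E(G)$ to obtain $G/e$ with a new vertex $w$ replacing $\{u,v\}$. Define
\[
C_i' \;=\;
\begin{cases}
(C_i \setminus \{u,v\}) \cup \{w\}, & C_i \cap \{u,v\} \neq \emptyset, \\
C_i, & \text{otherwise.}
\end{cases}
\]
Clearly $|C_i'| \le |C_i|$, since we replace at most two vertices by one. The covering condition for vertices of $G/e$ other than $w$ is inherited, and for $w$ it holds because at least one of $u,v$ appears in each bag where $w$ now lives. Any edge of $G/e$ not incident to $w$ is an edge of $G$ and is covered as before; an edge $\{w,x\}$ in $G/e$ comes from an edge $\{u,x\}$ or $\{v,x\}$ in $G$, which is covered by some $C_i$, and hence by $C_i'$. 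The only nontrivial point is contiguity for $w$: the index sets $I_u = \{i : u \in C_i\}$ and $I_v = \{i : v \in C_i\}$ are each contiguous intervals of $[\ell]$, and since $\{u,v\} \in E(G)$ there is a bag containing both, so $I_u \cap I_v \neq \emptyset$ and therefore $I_u \cup I_v = \{i : w \in C_i'\}$ is again a contiguous interval. This contiguity argument is the only real obstacle in the proof; once it is in hand, induction on the number of minor operations finishes the lemma.
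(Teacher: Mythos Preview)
Your proof is correct; the paper itself does not give a proof of this lemma, merely calling it ``straightforward,'' so your argument simply supplies the details the paper omits. The three single-operation checks you give, and in particular the contiguity argument for the contracted vertex via the overlap of $I_u$ and $I_v$, are exactly the standard verification one would expect.
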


\begin{lemma}\label{lemma:pwd_path}
Let $T$ be a tree of pathwidth $\ell \geq 2$.
Then, there exists a simple path $P$ in $T$
such that deleting the vertices of $P$ from $T$ leaves a forest with each tree having pathwidth at most $\ell-1$.
\end{lemma}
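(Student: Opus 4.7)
The plan is to identify a set of ``critical'' vertices---those whose deletion would leave at least two maximum-pathwidth components---show that this set is the vertex set of a simple subpath of $T$, and then extend this subpath by exactly one vertex on each end. For each $v \in V(T)$, let $N_\ell(v)$ denote the set of neighbors $w$ of $v$ such that the component of $T-v$ containing $w$ has pathwidth exactly $\ell$. The first observation is that $|N_\ell(v)| \leq 2$ for every $v$: if $T-v$ contained three components of pathwidth $\ell$, then the subgraph of $T$ consisting of $v$ joined to those three components would have pathwidth at least $\ell+1$ by Lemma~\ref{lemma:connect_three_pwd} (applied with $k=\ell$), contradicting the hypothesis via Lemma~\ref{lem:pwminor}.

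If some vertex $v^*$ satisfies $|N_\ell(v^*)|=0$, then $P=(v^*)$ already works. Otherwise, let $\mathcal{B}=\{v : |N_\ell(v)|=2\}$. When $\mathcal{B}=\emptyset$, every vertex has exactly one pathwidth-$\ell$ neighbor; defining $f(v)$ to be this neighbor, the functional graph $v \mapsto f(v)$ cannot contain a cycle of length greater than two (since a longer cycle would be a cycle in $T$), so there must exist adjacent vertices $u,v$ with $f(u)=v$ and $f(v)=u$. Then $P=(u,v)$ works, since deleting $u$ and $v$ removes the unique pathwidth-$\ell$ component on each side.

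In the remaining case $\mathcal{B}\neq\emptyset$, the plan is to show that $\mathcal{B}$ is the vertex set of a subpath $P_B = (w_1, w_2, \ldots, w_m)$ of $T$ for some $m \geq 1$, and then take $P$ to be a one-vertex extension of $P_B$ at each end. Connectivity of $\mathcal{B}$ in $T$ follows since, for any $u, v \in \mathcal{B}$, each interior vertex $w$ of the $u$-to-$v$ path in $T$ has both the $u$-side and $v$-side of $T-w$ of pathwidth $\ell$ (each inherited from the branching of $u$ and $v$), so $w \in \mathcal{B}$. The degree of $\mathcal{B}$ in $T$ is at most $2$ by another application of Lemma~\ref{lemma:connect_three_pwd}: three $\mathcal{B}$-neighbors of some $v$ would yield pathwidth $\geq \ell + 1$ via the three components of $T - v$ containing them. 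For each endpoint $w_1$ of $P_B$, exactly one of its two pathwidth-$\ell$ directions points off of $P_B$ (or both do, if $m=1$); let $c_L$ denote that off-path neighbor and $C_L$ the corresponding pathwidth-$\ell$ component of $T-w_1$, and define $c_R, C_R$ analogously at $w_m$. Take $P = (c_L, w_1, w_2, \ldots, w_m, c_R)$.

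The main technical step---and the main obstacle---is to show that a one-vertex extension on each side suffices. Since $c_L \notin \mathcal{B}$ (as $C_L$ lies off of $P_B$), we have $|N_\ell(c_L)| \leq 1$; on the other hand, the component of $T-c_L$ external to $C_L$ contains $C_R$, a subtree of pathwidth $\ell$, so this external component itself has pathwidth exactly $\ell$. This contributes an entire unit to $|N_\ell(c_L)|$, forcing no component of $C_L - c_L$ to have pathwidth $\ell$; every such component has pathwidth at most $\ell - 1$, and a symmetric statement holds for $C_R - c_R$. Combined with the fact that at each internal $w_j$ both pathwidth-$\ell$ directions already lie along $P_B$ (so every off-path component at such $w_j$ has pathwidth at most $\ell - 1$), one concludes that every component of $T - V(P)$ has pathwidth at most $\ell - 1$.
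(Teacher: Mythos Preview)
Your proof is correct and follows essentially the same approach as the paper: define $\alpha(v)=|N_\ell(v)|$, observe $\alpha(v)\le 2$ via Lemma~\ref{lemma:connect_three_pwd}, handle the cases $\exists v:\alpha(v)=0$, $\forall v:\alpha(v)=1$, and $\exists v:\alpha(v)=2$ separately, and in the last case show $\mathcal{B}=\{v:\alpha(v)=2\}$ induces a path that one extends by a single vertex on each end. Your functional-graph argument in the $\mathcal{B}=\emptyset$ case is a clean alternative to the paper's greedy path construction, but otherwise the arguments coincide.
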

\begin{proof}
For every $v\in V(T)$, let $\alpha(v)$ denote the number of connected components of $T\setminus \{v\}$ of pathwidth $\ell$.
We first argue that for any $v\in V(T)$, we have $\alpha(v)\leq 2$.
To see that, assume for the sake of contradiction that there exists $v\in V(T)$, such that $T\setminus \{v\}$ contains connected components $C_1,C_2,C_3$, each of pathwidth at least $\ell$.
Then, by Lemma \ref{lemma:connect_three_pwd} it follows that $T$ must have pathwidth $\ell+1$, a contradiction.

First, observe that if there exists $v\in V(T)$ with $\alpha(v)=0$, then the path contaning only $v$ satisfies the assertion.
Next, we consider the case where for every $v\in V(T)$, $\alpha(v)=1$.
We construct a path $Q=x_1,\ldots,x_s$ as follows.
We set $x_1$ to be an arbitrary leaf of $T$.
Given $x_i$, let $y_i$ be the unique neighbor of $x_i$ in $T$, such that $y_i$ is contained in the unique connected component of $T\setminus \{x_i\}$ of pathwidth $\ell$.
If there exists $j<i$, such that $x_j=y_i$, then we terminate the path $Q$ at $x_i$, and we set $s=i$.
Otherwise, we set $x_{i+1}=y_i$, and continue at $x_{i+1}$.
We now argue that $Q$ satisfies the assertion.
For the sake of contradiction suppose that $T\setminus V(Q)$ contains a connected component $C$ of pathwidth $\ell$.
The component $C$ must be attached to $Q$ via some edge $\{y,x_r\}$, with $y\in V(C)$.
This implies however that $y$ is chosen as $y_r$ when examining $x_r$, and therefore $y$ must be in $Q$, a contradiction.

Finally, it remains to consider the case where there exists at least one $v\in V(T)$, with $\alpha(v)=2$.
Let $X=\{v\in V(T) : \alpha(v)=2\}$.
Let $H=T[X]$ be the subgraph of $T$ induced on $X$.
We first argue that $H$ is connected.
To see this, let $x,y\in X$, and let $L$ be the unique path between $x$ and $y$ in $T$.
Since $\alpha(x)=\alpha(y)=2$, it follows that there exist connected components $C_x,C_y$ of $T\setminus V(L)$ with $C_x$ attached to $x$, and $C_y$ attached to $y$, such that both $C_x$ and $C_y$ have pathwidth $\ell$.
Let $z\in V(L)$.
It follows that there exist components $C_x', C_y'$ of $T\setminus \{z\}$, such that $C_x\subseteq C_x'$, and $C_y\subseteq C_y'$, which implies that $\alpha(z)=2$.
Thus, $z\in X$.
This implies that $L\subseteq H$, and therefore $H$ must be connected.

We next show that $H$ is a path.
To see this suppose for the sake of contradiction that there exists $v\in V(H)$ with distinct
neighbors $v_1,v_2,v_3\in V(H)$.
Since $\alpha(v_1)=\alpha(v_2)=\alpha(v_3)=2$, it follows that there exist components $C_1,C_2,C_3$ of $T\setminus \{v,v_1,v_2,v_3\}$, with each $C_i$ adjacent to $v_i$, and such that each $C_i$ has pathwidth $\ell$, for all $i\in \{1,2,3\}$.
By Lemma \ref{lem:pwminor} we have that for any $i\in \{1,2,3\}$, the connected component of $T\setminus \{v\}$ containing $v_i$ has pathwidth at least $\ell$.
Applying Lemma \ref{lemma:connect_three_pwd}, we obtain that $T$ has pathwidth at least $\ell+1$, a contradiction.
Therefore, $H$ is a path.

Let $w_1,w_2$ be the two endpoints of the path $H$.
We remark that we might have $w_1=w_2$, if there is only one vertex in $H$.
Since $\alpha(w_1)=2$, it follows that there exists a connected component of  $C_{w_1}$ of $T \setminus V(H)$ of pathwidth $\ell$ which is attached to $w_1$.
Similarly, there exists a connected component $C_{w_2}$ of $T \setminus V(H)$ of pathwidth $\ell$ which is attached to $w_2$.
Note that even if $w_1=w_2$, since $\alpha(w_1)=2$, the components $C_{w_1}$, $C_{w_2}$ can be chosen to be distinct.
Let $w_1',w_2'$ be the neighbors of $w_1$, and $w_2$ in $C_{w_1}$, and $C_{w_2}$ respectively.
Let $H'$ be the path obtained by adding $w_1'$, and $w_2'$ to $H$.

We will show that $Q=H'$ satisfies the assertion of the lemma.
To that end, it remains to show that any connected component of $T\setminus V(H')$ has pathwidth at most $\ell-1$.
Let $C$ be a component of $T\setminus V(H')$, and suppose for the sake of contradiction that it has pathwidth $\ell$.
Suppose first that $C$ is attached to a vertex $v\in V(H)$.
Since $v\in V(H)$, it follows that $\alpha(v)=2$.
By applying Lemma \ref{lemma:connect_three_pwd} on the clusters $C$, $C_{w_1}$, and $C_{w_2}$, we obtain that $T$ contains a minor of pathwidth at least $\ell+1$, which combined with Lemma \ref{lem:pwminor} leads to a contradiction.

Finally, suppose that $C$ is attached to a vertex $w\in \{w_1',w_2'\}$, and assume, without
loss of generality, that $w=w_1'$.
Then it follows that $T\setminus \{w\}$ contains at least two components of pathwidth $\ell$ (one containing $C$, and another containing $C_{w_2}$), and thus $\alpha(w)=2$, a contradiction since $w\notin X$.
This concludes the proof.
\end{proof}

We now state the main result of this Section.

\begin{theorem}\label{thm:nonembed_quantitative}
For any $k\geq 1$, and for any $n\geq 1$, there exists an $n$-vertex tree $G$ of pathwidth $k+1$, such that any stochastic
$D$-embedding of $G$ into metric graphs of pathwidth $k$, has $D \geq \Omega(n^{2^{-k}})$.
In particular, $\mathsf{Pathwidth}(k+1) \cap \mathsf{Trees} \not\embeds \mathsf{Pathwidth}(k)$.
\end{theorem}

The remainder of this Section is devoted to proving Theorem \ref{thm:nonembed_quantitative}.
We first construct a graph that will be used for the lower bound.
For each $i\geq 1$, let $\Phi_{i}$ be the unit-weighted graph consisting of a vertex $v$ connected
 to $i$ disjoint paths of length $i$.
Observe that $\Phi_i$ is a tree with $i$ leaves.
We consider $\Phi_i$ as being rooted at the vertex $v$.

For each $i\geq 1$, and for each $m\geq 1$, we define the graph $\Psi_{i,m}$ as follows.
For $i=1$, we set $\Psi_{1,m}=\Phi_{\lceil \sqrt{m}\rceil}$.
For $i\geq 2$, let $\Psi_{i,m}$ be the graph obtained by identifying the root of a copy of $\Psi_{i-1,\sqrt{m}}$, with each leaf of $\Phi_{\lceil \sqrt{m}\rceil}$.
For $\ell \leq \lceil \sqrt{m}\rceil$, let $\Psi_{i,m,\ell}$ be the tree obtained from $\Psi_{i,m}$ by deleting $\lceil\sqrt{m}\rceil-\lceil \ell\rceil$ children
of the root of $\Psi_{i,m}$, along with everything underneath those children.
In particular, $\Psi_{i,m,\sqrt{m}} = \Psi_{i,m}$.

\begin{figure}
\begin{center}
\includegraphics[width=3in]{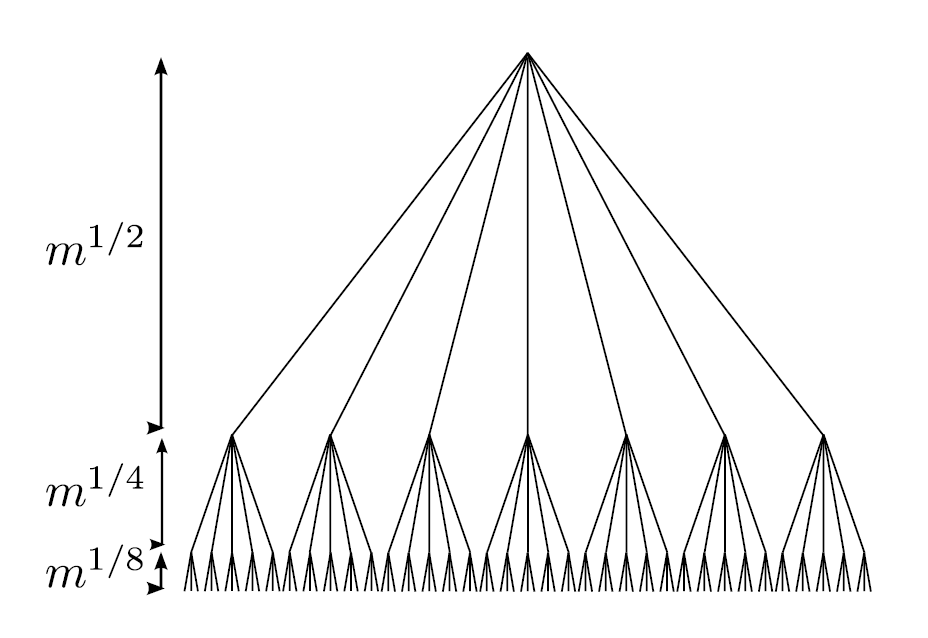}
\caption{The tree $\Psi_{3,m}$.\label{fig:lower}}
\end{center}
\end{figure}

\begin{lemma}\label{lemma:Psi_pwd}
For each $i\geq 1$ and $m\geq 3^{2^i}$, $\Psi_{i,m}$ has pathwidth $i+1$.
\end{lemma}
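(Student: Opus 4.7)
The plan is to establish $\mathrm{pw}(\Psi_{i,m}) = i+1$ by induction on $i$, handling the upper and lower bounds separately; both proofs exploit the recursive ``spider-of-spiders'' structure of $\Psi_{i,m}$: a central vertex $v$ with $\lceil\sqrt{m}\rceil$ disjoint length-$\lceil\sqrt{m}\rceil$ paths emanating from it, each capped by a copy of $\Psi_{i-1,\sqrt{m}}$.

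For the upper bound I would prove the strengthened inductive claim that $\Psi_{i,m}$ admits a path decomposition of width $i+1$ in which $v$ lies in every bag. For $i=1$ this is immediate for $\Psi_{1,m}=\Phi_{\lceil\sqrt{m}\rceil}$: traverse the legs one at a time using bags of the form $\{v,u_{j-1},u_j\}$. For the inductive step, concatenate the decompositions leg by leg: along each leg, first walk its length-$\lceil\sqrt{m}\rceil$ spine with width-$2$ bags of the shape $\{v,u_{j-1},u_j\}$, then splice in the inductive path decomposition of the attached $\Psi_{i-1,\sqrt{m}}$ (of width $i$, with its own root in every bag) after inserting $v$ into each of its bags. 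Adding $v$ bumps that segment from width $i$ to width $i+1$, and the ubiquity of $v$ across all bags makes the concatenation itself a valid path decomposition with the correct width.

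For the lower bound I would argue by contradiction, assuming $\mathrm{pw}(\Psi_{i,m})\leq i$. For $i=1$, note that $\lceil\sqrt{m}\rceil\geq 3$ and $\Phi_{\lceil\sqrt{m}\rceil}$ contains three length-$\lceil\sqrt{m}\rceil$ paths joined at a common vertex, so Lemma \ref{lemma:connect_three_pwd} applied to these three paths (each of pathwidth $1$) already forces pathwidth at least $2$. For $i\geq 2$, Lemma \ref{lemma:pwd_path} supplies a simple path $P$ in $\Psi_{i,m}$ such that $\Psi_{i,m}\setminus V(P)$ is a forest of pathwidth at most $i-1$. Since $P$ is a simple path in a \emph{tree}, it meets $v$ in at most one vertex and therefore enters at most two of the $\lceil\sqrt{m}\rceil\geq 3$ legs; at least one leg is disjoint from $P$, so the $\Psi_{i-1,\sqrt{m}}$ capping it survives as a connected subgraph of $\Psi_{i,m}\setminus V(P)$. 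The inductive hypothesis applies because $m\geq 3^{2^i}$ forces $\sqrt{m}\geq 3^{2^{i-1}}$, so that surviving subtree has pathwidth exactly $i$; Lemma \ref{lem:pwminor} then yields $\mathrm{pw}(\Psi_{i,m}\setminus V(P))\geq i$, contradicting the bound from Lemma \ref{lemma:pwd_path}.

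The main obstacle lies in the lower bound: one must ensure that some copy of $\Psi_{i-1,\sqrt{m}}$ always survives the removal of $P$, regardless of how $P$ is chosen. The tree structure is essential, since a simple path in a tree cannot branch through $v$ into more than two legs, and this is what makes the count $\lceil\sqrt{m}\rceil\geq 3$ sufficient. The doubly exponential threshold $m\geq 3^{2^i}$ is exactly what is needed to propagate the hypothesis $\sqrt{m}\geq 3^{2^{i-1}}$ one level deeper in the recursion.
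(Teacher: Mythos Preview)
Your argument is correct, and your upper bound is a clean fleshing-out of what the paper dismisses as ``easy to check by hand.'' The lower bound, however, follows a genuinely different route from the paper's.

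The paper's lower bound is shorter: it simply observes that for $m\geq 3^{2^i}$ the tree $\Psi_{i,m}$ contains a full ternary tree of depth $i$ as a minor, then shows by a direct induction using Lemma~\ref{lemma:connect_three_pwd} that the depth-$i$ ternary tree has pathwidth $i+1$, and concludes via Lemma~\ref{lem:pwminor}. In particular it never invokes Lemma~\ref{lemma:pwd_path}. Your approach instead runs the induction on $\Psi_{i,m}$ itself and uses Lemma~\ref{lemma:pwd_path} to extract the path $P$; the key combinatorial step is then the observation that a simple path in a tree can touch at most two branches at the root, so one copy of $\Psi_{i-1,\sqrt m}$ survives intact. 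Both are valid; the paper's route is more self-contained (Lemma~\ref{lemma:pwd_path} is a heavier tool), while yours has the virtue of working directly with the graph in question rather than passing to a minor.

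One small point to tighten: Lemma~\ref{lemma:pwd_path} is stated for trees of pathwidth $\ell\geq 2$, so when you assume $\mathrm{pw}(\Psi_{i,m})\leq i$ for contradiction you should note that $\mathrm{pw}(\Psi_{i,m})\geq 2$ as well (e.g.\ because it contains $\Phi_3$ as a subgraph, which your base case already handles). This is implicit in your write-up but worth making explicit so the hypothesis of Lemma~\ref{lemma:pwd_path} is visibly satisfied.
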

\begin{proof}
Note that for $m \geq 3^{2^i}$, $\Psi_{i,m}$ contains, as a minor, a full ternary tree $T$ of depth $i$.
Using Lemma \ref{lemma:connect_three_pwd} inductively shows that the pathwidth of the depth $i$
ternary tree is $i+1$, hence Lemma \ref{lem:pwminor} implies that the pathwidth of $\Psi_{i,m}$
is at least $i+1$.  It is also easy to check by hand that the pathwidth of $\Psi_{i,m}$ is at most $i+1$,
for every $m \geq 1$.
\end{proof}

Fix $k\geq 1$, and let $G=\Psi_{k,m}$.
By Lemma \ref{lemma:Psi_pwd}, for $m \geq 3^{2^k}$, $G$ has pathwidth $k+1$.
We will show that for $m$ large enough,
any stochastic $c$-embedding of $(V(G),d_G)$ into a distribution over metric graphs of pathwidth $k$, has distortion $c \geq \Omega(n^{2^{-k}})$,
were $n = |V(G)|$.

\medskip

Assume there exists a stochastic $c$-embedding of $(V(G),d_G)$ into a distribution over metric graphs of pathwidth $k$.
By composing this with the result of Theorem \ref{thm:pathwidth-into-trees}, we get a stochastic $c'$-embedding
of $(V(G),d_G)$ into the family of metrics supported on $\mathsf{Pathwidth}(k) \cap \mathsf{Trees}$, with $c' = O(c)$ (where the $O(\cdot)$
notation hides a constant depending on the fixed parameter $k$).

By averaging, there exists a metric tree $T$ of pathwidth $k$ and a single non-contractive mapping $f : V(G) \to V(T)$
which satisfies,
\[
\frac{1}{|E(G)|} \sum_{\{u,v\}\in E(G)} d_T(f(u),f(v))\leq c'.
\]
Thus it suffices to prove a lower bound on this quantity.
In fact we will prove a somewhat stronger statement; we will give a lower bound on the average stretch of any non-contractive embedding of $\Psi_{k,m,\sqrt{m}/2}$.
We first prove an auxiliary lemma.

\begin{lemma}\label{lem:close_to_P}
Let $S$ be an unweighted tree with $r\in V(S)$, and let $L \geq 0$.
Let $\ell \geq 0$, and let $S_1,\ldots,S_{\ell}$ be vertex-disjoint subtrees of $S$ such that for each $i\in [\ell]$, $S_i$ is attached to $r$ via a path $Q_i$ of length at least $L$, and
for each $i\neq j\in [\ell]$, the paths $Q_i$ and $Q_j$ intersect only at $r$.
We remark that each $S_i$ might contain only a single vertex.
Let $g : V(S) \to V(T)$ be a non-contractive embedding of $S$ into a metric tree $T$, and let $P$ be a simple path in $T$.
If $I=\{i\in [\ell]:d_T(g(S_i),P) < L/2\}$, then
\[
\sum_{\{u,v\}\in E(S)} d_T(g(u),g(v)) \geq
\frac{|I|^2 L}{16}.
\]
\end{lemma}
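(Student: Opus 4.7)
The plan is to use a projection-onto-$P$ argument. For each $i \in I$, I fix a vertex $s_i \in V(S_i)$ with $d_T(g(s_i), P) < L/2$, let $a_i \in P$ be a closest point on $P$ to $g(s_i)$, and let $a_0 \in P$ be a closest point on $P$ to $g(r)$.

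First, I would show that the projections $\{a_i\}_{i \in I}$ are well-separated along $P$. Since $s_i \in S_i$ and $S_i$ is attached to $r$ via $Q_i$ of length at least $L$, and $S$ is a tree, the unique $S$-path from $s_i$ to $s_j$ (for $i \neq j$ in $I$) traverses all of $Q_i \cup Q_j$, so $d_S(s_i, s_j) \geq 2L$. Non-contractivity then gives $d_T(g(s_i), g(s_j)) \geq 2L$. A standard property of trees states that for any $x, y \in V(T)$ with closest points $\pi(x), \pi(y) \in P$,
$$d_T(x, y) \leq d_T(x, \pi(x)) + d_P(\pi(x), \pi(y)) + d_T(\pi(y), y).$$
Applying this to $g(s_i), g(s_j)$, together with the bounds $d_T(g(s_i), a_i), d_T(g(s_j), a_j) < L/2$, yields $d_P(a_i, a_j) > L$.

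Next, I would exploit that the unique $S$-paths from $r$ to the various $s_i$'s share only the vertex $r$ and hence have pairwise disjoint edge sets. For each $i \in I$, the triangle inequality applied along this path gives $\sum d_T(g(u), g(v)) \geq d_T(g(r), g(s_i))$, the sum running over edges of the $r$-to-$s_i$ path in $S$. Summing over $i$ and applying the dual tree-projection inequality $d_T(x, y) \geq d_P(\pi(x), \pi(y))$ (which holds because, when $\pi(x) \neq \pi(y)$, the unique $x$-$y$ path in $T$ contains the entire $\pi(x)$-$\pi(y)$ subpath of $P$, and is trivial otherwise) gives
$$\sum_{\{u,v\} \in E(S)} d_T(g(u), g(v)) \ \geq\ \sum_{i \in I} d_T(g(r), g(s_i)) \ \geq\ \sum_{i \in I} d_P(a_0, a_i).$$

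Finally, I would handle the elementary arithmetic. Ordering the points of $\{a_i\}_{i \in I}$ along $P$, the separation $d_P(a_i, a_j) > L$ says consecutive points in this order are more than $L$ apart; for any fixed $a_0 \in P$, the minimum of $\sum_{i \in I} d_P(a_0, a_i)$ is attained at the median and is at least $L \lfloor |I|/2 \rfloor \lceil |I|/2 \rceil \geq L|I|^2/16$ for $|I| \geq 2$ (the case $|I| \leq 1$ is trivial, since $\sum_{\{u,v\} \in E(S)} d_T(g(u),g(v)) \geq |E(Q_1)| \geq L$ follows from non-contractivity). The main obstacle I anticipate is pinning down the pair of tree-projection inequalities above; both rely on the same basic fact that in a tree containing a subpath $P$, the unique path between two vertices traverses both of their closest points on $P$ whenever these differ. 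Once this is in hand, the rest is a clean summation and an elementary counting step.
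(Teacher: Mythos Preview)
Your argument is correct and follows essentially the same route as the paper: project the images of the $S_i$ onto $P$, use non-contractivity and the tree-projection inequalities to show these projections are more than $L$ apart, and then lower-bound the total edge stretch by summing $d_T(g(r),\cdot)$ along the edge-disjoint $r$-to-$S_i$ paths. Your choice of the witness vertex $s_i$ (rather than the paper's entry vertex $w_i$) and the two-sided median sum (rather than the paper's one-sided count) are minor variations that arguably make the projection step a bit cleaner, but the core idea is identical.
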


\begin{proof}
For each $i\in I$, let
$z_i = \mbox{argmin}_{v\in P}\, d_T(v, g(S_i))$,
and
\[
B_i = \{x\in V(P) : d_T(z_i,x)\leq L/2\}.
\]
Since $g$ is non-contractive, we have that for each $i,j \in I$ with $i\neq j$, $d_T(g(S_i), g(S_j))\geq 2L$, and therefore
$$d_T(z_i,z_j)\geq d_T(g(S_i),g(S_j))-d_T(g(S_i),z_i)-d_T(g(S_j),z_j) = 2L-d_T(S_i,P)-d_T(S_j,P) > L,$$ which implies $B_i\cap B_j=\emptyset$.

By reordering, we assume that $I=\{1,2,\ldots,|I|\}$, and that for each $i,j \in I$ with $i<j$, $B_i$ appears to the left of $B_j$ in $P$, after fixing some orientation of $P$.
Furthermore, by choosing the proper orientation, we may assume that there
is a vertex $u_0 \in P$ such that $u_0$ is contained in, or appears to the left of $B_{\lceil |I|/2\rceil}$ in $P$,
and $g(r)$ and $u_0$ are in the same subtree of $T \setminus E(P)$.

For each $i\in \{1,\ldots,|I|\}$, let $w_i = \mbox{argmin}_{v\in V(S_i)} d_S(v,r)$.
It follows that for each $i\in \{\lceil |I|/2\rceil+1,\ldots,|I|\}$, $$d_T(g(r),g(w_i)) \geq \left(i-\lceil |I|/2\rceil\right) L.$$
Furthermore, for every $i \in [\ell]$, clearly $d_T(g(r), g(w_i)) \geq L$ by non-contractiveness.
Therefore,
\begin{align*}
\sum_{\{u,v\}\in E(S)} d_T(g(u),g(v)) &\geq \sum_{i\in \{\lceil 1,\ldots,|I|\}} \sum_{\{u,v\}\in E(Q_i)} d_T(g(u), g(v))\\
  &\geq \sum_{i\in \{\lceil 1,\ldots,|I|\}} d_T(g(r), g(w_i))\\
  &\geq  \left\lceil \frac{|I|}{2}\right\rceil L +  \sum_{i\in \{\lceil |I|/2\rceil+1,\ldots,|I|\}} \left(i-\left\lceil\frac{|I|}{2}\right\rceil\right) L
  >  \frac{|I|^2 L}{16}.
\end{align*}
\end{proof}

The proof of the lower bound proceeds by induction on $k$.
We first prove the base case for embedding into trees of pathwidth one.

\begin{lemma}\label{lem:lower-base-case}
Let $g : V(\Psi_{1,m,\sqrt{m}/2}) \to V(T)$ be a non-contracting embedding into a metric tree $T$ of pathwidth one.
Then,
$$
\frac{1}{|E(\Psi_{1,m,\sqrt{m}/2})|} \sum_{\{u,v\} \in E(\Psi_{1,m,\sqrt{m}/2})} d_T(g(u),g(v)) \geq \frac{\sqrt{m}}{2^{10}}.
$$
\end{lemma}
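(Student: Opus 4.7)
The plan is to exploit the fact that every connected graph of pathwidth one is a caterpillar: $T$ has a simple spine path $P$ such that every vertex of $T$ is either on $P$ or a leaf adjacent to $P$ by a single edge. We apply Lemma~\ref{lem:close_to_P} to this $P$, with a careful choice of the subtrees $S_i$ and the length parameter $L$.

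First I would set up the pieces. Let $\Psi = \Psi_{1,m,\sqrt{m}/2}$, let $r$ be its root, and let $B_1,\dots,B_\ell$ (with $\ell = \lceil\sqrt{m}/2\rceil$) be its branches at $r$, each a path of $\lceil\sqrt{m}\rceil$ edges. Split each $B_i$ into the initial sub-path $Q_i$ starting at $r$ of length $L = \lceil\sqrt{m}\rceil/2$, and the remaining sub-path $S_i$, which has about $\sqrt{m}/2$ vertices. The $Q_i$'s pairwise intersect only at $r$. Let $P$ be the spine of the caterpillar $T$, and put $I = \{i : d_T(g(S_i),P) < L/2\}$, exactly the index set in Lemma~\ref{lem:close_to_P}.

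If $|I|\geq \ell/2$, then Lemma~\ref{lem:close_to_P} applied with this $P$ and these $S_i$'s immediately gives $\sum_{\{u,v\}\in E(\Psi)} d_T(g(u),g(v)) \geq |I|^2 L/16 \gtrsim m\sqrt{m}$, and dividing by $|E(\Psi)|\approx m/2$ yields average stretch $\gtrsim \sqrt{m}$. Otherwise, at least $\ell/2$ indices $i$ satisfy $d_T(v,P)\geq L/2$ for every $v\in g(S_i)$. Since $T$ is a caterpillar, each such $v$ must be a leaf of $T$ attached to $P$ by a single edge of length at least $L/2$. By non-contractiveness, $g$ is injective on $S_i$ (distances in the unweighted $\Psi$ between distinct vertices are positive integers), so $g(S_i)$ is a set of distinct leaves of $T$, each attached to $P$ via a long leaf edge. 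For any two distinct such leaves $v,v'$, the unique $v$-$v'$ path in $T$ must traverse both leaf edges, so $d_T(v,v')\geq L$. Hence every edge of the sub-path $S_i$ stretches to at least $L$, giving a contribution of roughly $(\sqrt{m}/2)\cdot L \approx m/4$ per such branch; summing over $\geq \ell/2\approx \sqrt{m}/4$ branches yields total stretch $\gtrsim m\sqrt{m}$ and hence average stretch $\gtrsim \sqrt{m}$.

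The main substantive step is the observation that pathwidth-one trees are exactly caterpillars, which is what makes the ``$g(S_i)$ far from $P$'' case tractable: in a general tree a vertex far from some chosen path could lie in the interior of a fat sub-tree, and there would be no obvious way to force each edge of the path $S_i$ to pay individually. Everything else is bookkeeping to check that both cases produce the required constant $1/2^{10}$; a slightly more careful choice of $L$ and of the split threshold for $|I|$ yields this comfortably.
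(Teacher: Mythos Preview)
Your proposal is correct and follows essentially the same approach as the paper's proof: both use the caterpillar structure of pathwidth-one trees, split each branch of the spider into an inner and outer half, and then dichotomize on whether many outer halves land close to the spine $P$ (in which case Lemma~\ref{lem:close_to_P} applies) or far from it (in which case every edge of the outer half connects two distinct long leaf-edges and so stretches by $\Omega(\sqrt{m})$). The only differences are cosmetic choices of constants and naming.
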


\begin{proof}
Since the tree $T$ has pathwidth one, it consists of a path $P=\{ v_1,\ldots,v_t \}$, and a collection of vertex-disjoint stars $T_1,\ldots,T_t$, with each $T_i$ being rooted at $v_i$.
Note that $T_i$ might contain only the vertex $v_i$.

Recall that $\Psi_{1,m,\sqrt{m}/2}$ consists of $\lfloor \sqrt{m}/2\rfloor$ disjoint paths $Q_1,\ldots,Q_{\lfloor \sqrt{m}/2\rfloor}$, with $$Q_i=\{r,q_{i,1},\ldots,q_{i,\lceil \sqrt{m}\rceil}\},$$ where $r$ is the root of $\Psi_{1,m,\sqrt{m}/2}$.
For each $i\in [\lfloor \sqrt{m}/2\rfloor]$ let $Q_i'$ be the subpath of $Q_i$ of length $\lfloor \sqrt{m}/2\rfloor $ with $Q_i'=\left\{q_{i,\lceil \sqrt{m}/2\rceil},\ldots,q_{i,\lceil\sqrt{m}\rceil}\right\}$.

Let
$I_1 = \{i\in [\lfloor \sqrt{m}/2\rfloor] : d_T(g(Q_i'), P) \geq \sqrt{m}/4\}$,
and let $I_2=[\lfloor \sqrt{m}/2\rfloor]\setminus I_1$.
By Lemma \ref{lem:close_to_P},
$$
\sum_{\{u,v\} \in E(\Psi_{1,m,\sqrt{m}/2})} d_T(g(u),g(v)) \geq \frac{|I_2|^2 \sqrt{m}}{32}.
$$
Since $|E(\Psi_{1,m})| \leq 2m$, we are done if $|I_2| \geq \sqrt{m}/4$.

\medskip

It remains to consider the case $|I_1| \geq \lfloor \sqrt{m}/4\rfloor$.
Observe that for each $i\in I_1$, all the edges of $Q_i'$ have their endpoints mapped to distinct
leaves of the stars $T_1,\ldots,T_t$, with the edge adjacent to each such leaf having length at least $\sqrt{m}/4$, by
non-contractiveness of $g$.
Therefore, each edge of such a $Q_i'$ is stretched by a factor of $\sqrt{m}/2$ in $T$.
In other words,
\begin{eqnarray*}
\frac{1}{|E(\Psi_{1,m,\sqrt{m}/2})|} \sum_{\{u,v\}\in E(\Psi_{1,m,\sqrt{m}/2})} d_T(g(u),g(v))  & \geq & \frac{1}{2m} \cdot |I_1| \cdot \left\lfloor\frac{\sqrt{m}}{2} \right\rfloor \frac{\sqrt{m}}{2}\\
& = & \frac{\sqrt{m}}{4} \left\lfloor \frac{\sqrt{m}}{4}\right\rfloor \left\lfloor \frac{\sqrt{m}}{2}\right\rfloor \geq \frac{\sqrt{m}}{32},
\end{eqnarray*}
with the latter bound holding for $m \geq 4$.  Observe that the LHS is always at least 1,
yielding the desired result for $m \leq 4$ as well, and completing the proof.
\end{proof}

We are now ready to prove the main inductive step.

\begin{lemma}\label{lem:lower-general-case}
Let $k\geq 1$, $a \in \mathbb N$, $m = (2a)^{2^k}$, and let $g : V(\Psi_{k,m,\sqrt{m}/2}) \to V(T)$ be a non-contractive embedding of $\Psi_{k,m,\sqrt{m}/2}$ into a metric
tree $T$ of pathwidth $k$.
Then, 
$$
\frac{1}{|E(\Psi_{k,m,\sqrt{m}/2})|}
\sum_{\{u,v\} \in E(\Psi_{k,m,\sqrt{m}/2})} d_T\left(g(u),g(v)\right) \geq \frac{m^{2^{-k}}}{2^{7+3k}}\,.
$$
\end{lemma}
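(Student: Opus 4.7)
I would argue by induction on $k$. The base case $k=1$ is precisely Lemma \ref{lem:lower-base-case}. For the inductive step, I would use Lemma \ref{lemma:pwd_path} to decompose $T$ along a pathwidth-reducing skeleton path $P$, then apply Lemma \ref{lem:close_to_P} to the $\lfloor\sqrt{m}/2\rfloor$ top-level attached sub-instances of $\Psi_{k,m,\sqrt{m}/2}$ to dichotomize into a ``near-$P$'' case, where the bound comes immediately from Lemma \ref{lem:close_to_P}, and a ``far-from-$P$'' case that recurses on a component of $T \setminus V(P)$.

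\textbf{Setup and top-level dichotomy.} Recall that $\Psi_{k,m,\sqrt{m}/2}$ consists of a root $r$ with $\ell := \lfloor\sqrt{m}/2\rfloor$ attached vertex-disjoint paths $Q_1, \ldots, Q_\ell$ of length $\sqrt{m}$, and at the far endpoint $c_i$ of each $Q_i$ hangs a copy $S_i$ of $\Psi_{k-1,\sqrt{m}}$. A simple induction shows $|E(\Psi_{k,m,\sqrt{m}/2})| \leq k m$. Applying Lemma \ref{lemma:pwd_path} to $T$, fix a simple path $P\subseteq T$ such that every connected component of $T\setminus V(P)$ is a tree of pathwidth at most $k-1$. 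Apply Lemma \ref{lem:close_to_P} with subtrees $S_1, \ldots, S_\ell$ and $L=\sqrt{m}$, and set $I = \{i : d_T(g(S_i), P) < \sqrt{m}/2\}$, $J = [\ell]\setminus I$.

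\textbf{Case A: $|I| \geq \lfloor\sqrt{m}/4\rfloor$.} Lemma \ref{lem:close_to_P} gives total stretch at least $|I|^2\sqrt{m}/16 \gtrsim m^{3/2}$, and normalizing by $|E|\leq km$ produces average stretch $\Omega(\sqrt{m}/k)$, which dominates the target $m^{2^{-k}}/(2^{8+2k}k)$ with room to spare for any $k\geq 1$.

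\textbf{Case B: $|J| \geq \sqrt{m}/4$.} For each $i\in J$ we have $d_T(g(S_i),P) \geq \sqrt{m}/2$, so $g(c_i)$ lies in a unique component $T_i'$ of $T\setminus V(P)$, and $T_i'$ is a tree of pathwidth at most $k-1$. Inside $S_i = \Psi_{k-1,\sqrt{m}}$ I would isolate a copy of $\Psi_{k-1,\sqrt{m}, m^{1/4}/2}$ by keeping $\lfloor m^{1/4}/2\rfloor$ of the $\lceil m^{1/4}\rceil$ top-level paths of length $m^{1/4}$ emanating from $c_i$, and apply a second-level dichotomy: either (i) we can select these branches so that the image of the chosen sub-instance lies entirely in $T_i'$, in which case the inductive hypothesis applied with pathwidth-$(k-1)$ target $T_i'$ and parameter $\sqrt{m}$ yields average stretch at least $(\sqrt{m})^{2^{-(k-1)}}/(2^{8+2(k-1)}(k-1)) = m^{2^{-k}}/(2^{6+2k}(k-1))$ on the edges of that sub-instance, or (ii) many branches of $S_i$ have images leaving $T_i'$, which forces each such branch to cross $V(P)$ and contributes a lower bound analogous to Lemma \ref{lem:close_to_P} applied inside $S_i$ with respect to a path through $g(c_i)$. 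Summing the contributions over $i\in J$ (total of order $\sqrt{m}\cdot |E(\Psi_{k-1,\sqrt{m},m^{1/4}/2})| \gtrsim (k-1)m$ edges contributing the per-edge bound) and dividing by $|E|\leq km$, the factor $4k/(k-1)$ slack between the inductive bound $1/(2^{6+2k}(k-1))$ and the target $1/(2^{8+2k}k)$ is enough to absorb the constants lost to the two-level dichotomy and the restriction from $S_i$ to its sub-instance.

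\textbf{Main obstacle.} The principal difficulty is executing Case B rigorously: since $g$ is only a vertex map, $g(V(S_i))$ need not lie inside a single component of $T\setminus V(P)$, so one cannot naively view $g|_{S_i}$ as an embedding into a pathwidth-$(k-1)$ tree. The secondary dichotomy must be designed so that in both subcases the stretch is cleanly chargeable, and the constants must be tracked carefully so that the inductive hypothesis closes into the claimed bound after summing over $J$.
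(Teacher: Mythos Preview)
Your proposal follows essentially the same approach as the paper's proof: induction on $k$ with base case Lemma~\ref{lem:lower-base-case}, using Lemma~\ref{lemma:pwd_path} to find the skeleton path $P$, applying Lemma~\ref{lem:close_to_P} for the top-level dichotomy, and then in the far case performing a secondary dichotomy according to whether a half-sized sub-instance $\Psi_{k-1,\sqrt{m},m^{1/4}/2}$ of $S_i$ lands inside a single pathwidth-$(k-1)$ component of $T\setminus V(P)$. The only notable deviations are cosmetic: the paper takes the distance threshold to be $\sqrt{m}/8$ rather than $\sqrt{m}/2$, and in your case B(ii) the paper does not reinvoke Lemma~\ref{lem:close_to_P} but argues directly that since every vertex of $g(S_i)$ lies at distance at least $\sqrt{m}/8$ from $P$, any branch of $S_i$ whose image leaves the component $T_\tau$ containing $g(c_i)$ must have two image points in distinct components of $T\setminus V(P)$, so the $T$-path between them passes through $P$ and has length at least $\sqrt{m}/4$; telescoping along the branch gives the per-branch edge-sum bound. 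You correctly identified exactly this as the main obstacle, and the paper resolves it in the way you anticipated.
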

\begin{proof}
We proceed by induction on $k$.
The base case $k=1$ is given by Lemma \ref{lem:lower-base-case}, so we can assume that $k\geq 2$, and that the assertion is true for $k-1$.

Since the tree $T$ has pathwidth $k\geq 2$, by Lemma \ref{lemma:pwd_path} it follows that it consists of a path $P$, and a collection of trees $T_1,\ldots,T_t$ of pathwidth at most $k-1$, with each $T_i$ being rooted at some vertex $v_i$, and $v_i$ being attached to $P$ via an edge.
Recall that $\Psi_{k,m,\sqrt{m}/2}$ consists of a root $r$ and $\sqrt{m}/2$ subtrees $Q_1,\ldots,Q_{\sqrt{m}/2}$, with each $Q_i$ having a copy $Q_i'$ of $\Psi_{k-1,\sqrt{m}}$ that is connected to $r$ via a path of length $\sqrt{m}$.

\medskip

Let
$I_1 = \{i\in [\sqrt{m}/2] : d_T(g(Q_i'), P) \geq \sqrt{m}/2\}$,
and let $I_2=[\sqrt{m}/2]\setminus I_1$.
By Lemma \ref{lem:close_to_P},
$$
\sum_{\{u,v\} \in E(\Psi_{k,m,\sqrt{m}/2})} d_T(g(u),g(v)) \geq \frac{|I_2|^2 \sqrt{m}}{16}.
$$
Since $|E(\Psi_{k,m})| \leq km$, this yields the desired result
for $|I_2| \geq \sqrt{m}/4$.

\medskip

It remains to consider the case $|I_1|\geq \sqrt{m}/4$.
Let $I_{1,1}$ be the subset of $I_1$ containing all indices $i\in I_1$ such that
for some $j\in [t]$, $T_j$ contains the image of a copy of $\Psi_{k-1,m^{1/2},m^{1/4}/2}$ from $Q_i'$.
Let also $I_{1,2}=I_1\setminus I_{1,1}$.

By the induction hypothesis it follows that for any $i \in I_{1,1}$,
\begin{equation}\label{eq:lower_I11}
\sum_{\{u,v\}\in E(Q_i')} d_T(g(u),g(v)) \geq m^{1/2}\cdot (k-1) \cdot \frac{(m^{1/2})^{2^{1-k}}}{2^{7+3(k-1)}} = m^{1/2} \cdot (k-1) \cdot \frac{m^{2^{-k}}}{2^{4+3k}}. 
\end{equation}
Consider now $i\in I_{1,2}$.
Let $r_i$ be the root of $Q_i'$ and let $W_{i,1},\ldots,W_{i,m^{1/4}}$ be the copies of $\Psi_{k-1,m^{1/2},1}$ in $Q_i'$, intersecting only at $r_i$.
By the definition of $I_{1,2}$ we have that for any $J\subset [m^{1/4}]$ with $|J|=m^{1/4}/2$, and for any $i'\in [t]$, $\bigcup_{j\in J}g(W_{i,j}) \nsubseteq T_{i'}$.
Assume that the image of $r_i$ is contained in $T_\tau$, for some $\tau\in [t]$.
It follows that there exists $R\subseteq [m^{1/4}]$, with $|R|\geq m^{1/4}/2$, such that for each $j\in R$, the image of $W_{i,j}$ intersects some tree $T_{\sigma_j}$, with $\sigma_j\neq \tau$.
Since $r_i\in V(W_{i,j})$ it follows that there exists an edge $e_{i,j}\in E(W_{i,j})\cup E(Z_{i,j})$ that is stretched by a factor of at least $m^{1/2}$.
It follows that for any $i \in I_{1,2}$,
\begin{equation}\label{eq:lower_I12}
\sum_{\{u,v\}\in E(Q_i')} d_T(g(u),g(v)) \geq \frac{m^{1/4}}{2} \cdot m^{1/2}.
\end{equation}
By \eqref{eq:lower_I11} we get a lower bound for the average stretch of the edges of every $Q_i'$, with $i\in I_{1,1}$.
Similarly, by \eqref{eq:lower_I12} we get a lower bound for the average stretch of the edges of every $Q_i'$, with $i\in I_{1,2}$.
Thus, combining (\ref{eq:lower_I11}) and (\ref{eq:lower_I12}) we get
\begin{eqnarray*}
\frac{1}{|E(\Psi_{k,m,\sqrt{m}/2})|} \sum_{\{u,v\}\in E(\Psi_{k,m,\sqrt{m}/2})} d_T(g(u),g(v)) & \geq & \frac{1}{k\cdot m} \cdot |I_1| \cdot m^{1/2} \cdot (k-1) \cdot \frac{m^{2^{-k}}}{2^{4+3k}}\\
 & > & \frac{m^{2^{-k}}}{2^{7+3k}},
\end{eqnarray*}
as desired.
\end{proof}

This concludes the proof of Theorem \ref{thm:nonembed_quantitative}.



\subsection*{Acknowledgements}

We thank Andrea Francke and Alexander Jaffe for a careful reading of our arguments,
and numerous valuable suggestions.  We are also grateful to the anonymous
referees for many insightful comments.

\bibliographystyle{alpha}
\bibliography{pathwidth,trees,treewidth}

\end{document}